\theoremstyle{plain}
\newtheorem{theorem}{Theorem}[section]
\newtheorem*{theorem*}{Theorem}
\newtheorem{corollary}{Corollary}[section]
\newtheorem{lemma}{Lemma}[section]
\newtheorem*{lemma*}{Lemma}
\newtheorem{proposition}{Proposition}[section]
\theoremstyle{definition}
\renewcommand{\Re}{\mathop{\mathrm{Re}}}
\DeclareMathOperator{\supp}{supp}
\DeclareMathOperator{\lspan}{span}
\begin{document}

\begin{center}
    \Large\bfseries On the injectivity of the generalized Radon transform arising in a model of mathematical economics
\end{center}

\begin{center}\it A.\,D. Agaltsov\footnote{Centre de Mathématiques Appliquées, Ecole Polytechnique, Route de Saclay, 91128 Palaiseau, France; email: agaltsov@cmap.polytechnique.fr}\footnote{Moscow State University, GSP-1, Leninskie Gory, 119991 Moscow, Russia}
\end{center}

\begin{quote}
In the present article we consider the uniqueness problem for the generalized Radon transform arising in a mathematical model of production. We prove uniqueness theorems for this transform and for the profit function in the corresponding model of production. Our approach is based on the multidimensional Wiener's approximation theorems.\medskip

\textbf{Keywords:} generalized Radon transform, production theory, uniqueness theorem.

\textbf{AMS classification}: 35R30 (inverse problems), 44A12 (Radon transform), 46N10 (applications of functional analysis in economics)
\end{quote}

\section{Introduction}
\textbf{The problem of microfoundations} is an important problem of modern macroeconomics. Early macroeconomic models were based on certain assumptions about macroeconomic variables. This approach aroused much controversy over the consistency of these assumptions with the laws of microeconomics. This led to the creation of macroeconomic models based on aggregation of microdescriptions.

In the theory of production functions the first model of this type was proposed in \cite{Houth1955}. It is shown in \cite{Houth1955} that the Cobb--Douglas production function arises as an aggregate production function of an industry that consists of production units with fixed proportion production technologies, in the case when the resources are distributed according to the Pareto law. Using the model of the article \cite{Houth1955} as a basis, L. Johansen in \cite{Joh1972} proposed a general framework for construction of production functions for industries representable as a union of production cells with fixed proportion production technologies. Some years later, it was shown in \cite{Henk1990} that under some mild conditions, the production function of an industry has at most one micro-founded description in this framework, and this description can be obtained using an explicit formula.

This framework has proven to be a successful tool for studying and understanding macroeconomic processes in such countries as Norway, Sweden, USA, India, Japan and others, see, e.g., \cite{Joh1972,Petr1979en,Petr1996en}. In particular, this model has been shown to be well adapted to take into account the changes in the production process caused by scientific and technological progress, see \cite{Joh1972}. Hovewer, the scope of this model is limited to production systems satisfying certain strict criteria in terms of their profit functions, see \cite{Henk1990}. 

In the 1990's--2000's the world economy faced the challenge of globalization. In this process producers become inserted into global markets via the export of goods and import of resources and technologies. Furthermore, the difference in inflation rates in internal and global markets leads to a situation where the proportion of imported resources used in production constantly changes. This leads to the need of revision and modification of aggregate models of production based on the assumption of fixed proportion production technologies at the microlevel. For a more detailed discussion of the impact of globalization on the microeconomics and its relation to aggregate economic models see, e.g., \cite{Tayl2001}.

A revision and modification of the model of \cite{Houth1955,Joh1972} was fulfiled in \cite{Sato1975,Shan1997aen,Shan1997ben}. The modified model (which we will refer to as the generalized Houthakker--Johansen model) provides a framework for construction of production functions for industries which are representable as a union of production units with neo-classical production technologies, allowing substitution between factors. The scope of the generalized Houthakker--Johansen model was investigated in \cite{Agal2015den}. It was shown in \cite{Agal2015den} that industries fitting the framework are characterized by an easily verifiable condition in terms of their profit functions.

The present article is devoted to developing the theoretical basis for the generalized Houthakker--Johansen model. We show, in particular, that the production function of an industry has at most one micro-founded description in the framework of the generalized Houthakker--Johansen model if and only if the production technologies satisfy a certain condition. This condition is fulfilled, in particular, for constant elasticity of substitution (CES) production technologies, including the case of fixed proportion technologies. Furthermore, this condition is stable with respect to aggregation of production factors at the micro-level (i.e., with respect to ``composition'' of production technologies). Our results can be considered as a development of the results of \cite{Henk1990}, where the case of fixed proportion technologies (corresponding to the model of \cite{Houth1955,Joh1972}) was considered.

We expect that the generalized Houthakker--Johansen model can be fruitfully used for studying macroeconomic processes under conditions of globalization. In particular, we expect it to be well suited for studying the implications of technological innovation on the production process. This question of determining the explanatory potential of the generalized Houthakker--Johansen model in applications to real data is of great interest, and it requires a subsequent research.

\textbf{Generalized Houthakker--Johansen model.} We consider a model of production introduced in \cite{Sato1975,Shan1997aen,Shan1997ben}. In this model, following \cite{Houth1955}, the industry is represented as a union of production cells (which can be considered as individual firms, machines or branches of industry). Each of these production cells can produce the same final product by means of $n \geq 2$ inputs using some technology. The model is described by two functions $f$, $F_0$ defined below.

The technologies are parametrized by vectors $x \in \mathbb R^n_+ = \bigl\{ (x_1,\ldots,x_n) \mid \forall k, \; x_k > 0 \bigr\}$. For each technology $x$ we define its capacity $f(x) \geq 0$ (which can be considered as a number of production cells using this technology). The function $f \colon \mathbb R^n_+ \to \mathbb R$ is called the distribution of capacities over technologies.

The function $F_0 \colon \mathbb R^n_+ \to \mathbb R^1_+$ is called the production function at the micro-level. Using $F_0$ we associate to each technology $x$ its production function~$F_x$:
\begin{equation*}
    F_x(u_1,\ldots,u_n) = \min \bigl\{ 1, F_0\bigl(\tfrac{u_1}{x_1},\ldots,\tfrac{u_n}{x_n} \bigr) \bigr\},
\end{equation*}
where $u = (u_1,\ldots,u_n)$ is the vector of inputs of the (unit) production cell using technology $x$ and $F_x(u_1,\ldots,u_n)$ is its output for one production period.

We suppose that $F_0$ has the neoclassical properties. It means that it is positive homogeneous of degree one (scaling of inputs corresponds to the same scaling of output), increasing in each variable (all inputs contribute to output), concave and continuous. The second property implies, in particular, that different inputs can substitute each other to a certain extent in the production process. This is typical for production systems experiencing the effects of globalization and standartization.

Given total inputs $l = (l_1,\ldots,l_n) \in \mathbb R^n_+$ for the union of the production cells (the industry), we seek for the maximal total output that we can obtain by varying the distribution of inputs $u = (u_1,\ldots,u_n)$ over the production cells:
\begin{equation}\label{in.optprob}
   F_A(l) = \max_{u} \left\{ \int_{\mathbb R^n_+} F_x\bigl(u(x)\bigr) f(x) \, dx \; \middle| \; \forall k, \int_{\mathbb R^n_+} u_k (x)f(x) \, dx \leq l_k \right\}. 
\end{equation}
The optimization problem \eqref{in.optprob} always has a solution in the class of non-negative measurable $u = (u_1,\ldots,u_n)$ with $u_1 f$, \dots, $u_n f \in L^1(\mathbb R^n_+)$ (it is proved in \cite{Shan1997aen}). We call $F_A$ the aggregate production function of the industry.

It can be shown that optimal distributions of resources in optimization problem \eqref{in.optprob} are provided by market mechanisms. More precisely, optimal distributions can be obtained by assuming that production units independently maximize their profits for some fixed prices of resources and final product, see \cite[Theorem 4.1]{Shan1997aen}.

However, the assumption that each production unit maximizes its profit is a simplification of the actual situation, especially under conditions of globalization. The global market is a highly interconnected system, and an excessive focus of individual firms within corporations on rewards together with risk compartmentalization (i.e. a situation when individual agents become unaware of implications of their actions for the whole system) can lead to the collapse of the system as a whole. The global financial meltdown of the 2000's is an example of this effect. For a detailed discussion of the subject, see \cite[Chapter 10]{Ferr2011}.

Using the aggregate production function $F_A$ we can compute the maximal possible profit of the industry for one production period:
\begin{equation*}
    (\Pi_q f)(p_0,p) = \sup \bigl\{ p_0 F_A(l) - p \cdot l \mid l \in \mathbb R^n_+ \bigr\},
\end{equation*}
where $p_0$ and $p=(p_1,\ldots,p_n)$ are the unit prices of the output and of the inputs, respectively. Function $\Pi_q$ is called the profit function of the industry. It can be shown that
\begin{equation}\label{in.prodfun}
      (\Pi_q f)(p_0,p) = \int_{\mathbb R^n_+} \max\bigl\{0,p_0 - q(p_1 x_1,\ldots,p_n x_n) \bigr\} f(x) \, dx,
\end{equation}
where $q(p_1 x_1,\ldots, p_n x_n)$ is the unit cost of production at the (unit) cell using technology $x=(x_1,\ldots,x_n)$ and $f$ is the distribution of capacities over technologies. The unit cost function $q$ is related to the production function at the micro-level by the following transform:
\begin{equation}\label{in.F0toq}
    q(x) = \inf\bigl\{ {\textstyle \frac{x \cdot y}{F_0(y)}} \mid y \in \mathbb R^n_+, \; F_0(y)>0 \bigr\}, \quad x \in \mathbb R^n_+.
\end{equation}
It inherits such properties of neoclassical production functions as positivity and positive homogeneity of degree one. Thus, in the present article we assume that
\begin{equation}\label{in.defq}
    \begin{gathered}
        \text{\it $q \in C^1(\mathbb R^n_+)$, $q(x) > 0$ and $q(\lambda x) = \lambda q(x)$ for $\lambda > 0$, $x \in \mathbb R^n_+$.} 
    \end{gathered}
\end{equation}

Note that the second derivative of function $\Pi_q f$ of \eqref{in.prodfun} with respect to $p_0$ is the generalized Radon transform
\begin{equation}\label{in.defRq}
    (R_q f) (p) = \int_{q_p^{-1}(1)} f(x) \, \frac{dS_x}{|\nabla q_p(x)|}, \quad p = (p_1,\ldots,p_n) \in \mathbb R^n_+, \\
\end{equation}
where $q_p(x) = q(p_1 x_1,\ldots,p_n x_n)$, $\nabla$ is the standard gradient in variable $x = (x_1,\ldots,x_n)$, $dS_x$ is the hypersurface measure on $q_p^{-1}(1) = \bigl\{ x \in \mathbb R^n_+ \colon q_p(x) = 1 \bigr\}$. It follows from \eqref{in.defq} that $\nabla q_p(x)$ doesn't vanish for $p$, $x \in \mathbb R^n_+$. 

Also note that the profit function \eqref{in.prodfun} can be written in the form $(\Pi_q f)(p_0,p) = (R^h_q f)(p)$ with $h(t) = \max\{0,p_0 - t\}$, where 
\begin{equation}\label{in.defRhq}
        (R^h_q f)(p) = \int_{\mathbb R^n_+} h(q(p_1 x_1,\ldots,p_n x_n)) f(x) \, dx, \quad p \in \mathbb R^n_+.
\end{equation}

The production function $F_A$ and the profit function $\Pi_q$ are the main (and equivalent) tools of description of the industry at the macro-level (i.e. as a whole). It follows from formulas \eqref{in.prodfun} and \eqref{in.F0toq} that $\Pi_q$ is completely determined by the micro-level information (i.e. information concerning production cells as independent units) $F_0$ and $f$.

\textbf{Production technologies.} As it was mentioned above, the generalized Houthakker--Johansen model allows arbitrary neo-classical production functions (technologies) at the micro-level. Hovewer, technologies are usually approximated using some standard functions with parameters identified from statistical data. The choice of these standard functions depends on the extent to which the production factors can substitute each other. 

The elasticity of substitution is the most used quantitative characteristic of substitutability of production factors, introduced in \cite{Hick1932} for the case of two factors. However, in general this characteristic is too difficult to evaluate from statistical data. In practice, one usually makes an assumption of constant elasticity of substitution of factors. The case of zero elasticity of substitution corresponds to fixed proportion production functions. On the other hand, the case of unitary elasticity of substitution corresponds to Cobb--Douglas production functions, introduced in \cite{Doug1928} for the case of two factors (namely, labor and capital). The Cobb--Douglas production functions are the simpliest and the most widely used in economics production functions allowing substitution between inputs. However, empirical evidence shows that these functions do not provide a satisfactiry description for many production systems. For example, some strong indications that the elasticity  of substitution between labor and capital in manufacturing can be less than one were pointed in \cite{Arro1961}.

A general production function with constant elasticity of substitution of production factors (CES) was introduced in \cite{Arro1961} for the case of two factors. Possible generalizations to an arbitrary number of factors were proposed by Allen and Hicks, Uzawa, McFadden, Morishima and others, see, e.g., \cite{Fron2011} for a brief survey. However, there is the unique class of production functions $F_0$ with several inputs for which all these elasticities are constant. These production functions $F_0$ correspond, according to \eqref{in.F0toq}, to the unit cost functions $q$ of the form $q = q_\alpha$, $\alpha \in [-\infty,1]$, where
\begin{equation}\label{in.defCES}
    \begin{aligned}
    q_\alpha(x) & = C ( a_1 x_1^\alpha+\cdots+ a_n x_n^\alpha)^{\frac 1 \alpha}, \quad \alpha \in (-\infty,1] \setminus 0,\\
    q_{-\infty}(x) & = C \min(a_1 x_1,\ldots, a_n x_n), \\
    q_0(x) & = C x_1^{a_1} \cdots x_n^{a_n},
    \end{aligned}
\end{equation}
and $C$, $a_1$, $\ldots$, $a_n > 0$, $a_1+\cdots+a_n=1$. The CES production technologies have become very popular in production theory (especially, in applied analysis) where they replace the Cobb--Douglas production functions. Hovewer, a significant drawback of CES technologies is that they imply the same elasticity of substitution for any pair of production factors.

Sato was the first who proposed to consider nested CES technologies allowing different degrees of substitutability between production factors in different groups, see \cite{Sato1967}. These technologies allow, in particular, to take into account that the elasticity of substitution between capital and unskilled labor is usually higher than between capital and skilled labor (this effect is known as capital-skill complementarity and it was first formalized in \cite{Gril1969}).

In the present article we consider a more general class of production technologies described by cost functions $q$ of the form \eqref{in.defq} satisfying the following condition:
\begin{equation}\label{in.lvlqbnd}
    \text{\it the level sets of $q$ are bounded.}
\end{equation}
Note that the class of cost functions $q$ satisfying \eqref{in.defq} and \eqref{in.lvlqbnd} contains linear functions with positive coefficients, CES functions with elasticity of substitution less than one and is closed under composition in the following sense: if $q \colon \mathbb R^k_+ \to \mathbb R^1_+$, $\phi \colon \mathbb R^m_+ \to \mathbb R^1_+$, $k \geq 2$, $m \geq 2$, both satisfy \eqref{in.defq}, \eqref{in.lvlqbnd}, and $i \in \{1,\ldots,k\}$, then the function $\widetilde q$ defined as
\begin{equation}\label{in.defnq}
  \begin{gathered}
    \widetilde q(x_1,\ldots,x_{i-1},x_{i+1},\ldots,x_k,y) = q(x_1,\ldots,x_{i-1},\phi(y),x_{i+1},\ldots,x_k), \\
    x = (x_1,\ldots,x_k) \in \mathbb R^k_+, \quad y \in \mathbb R^m_+,
  \end{gathered}
\end{equation}
also satisfies \eqref{in.defq}, \eqref{in.lvlqbnd}. The process of passing from cost function $\widetilde q$ to $q$ can be considered as an aggregation of production factors $y$ using cost index $\phi$. It is worth noting that conditions allowing representation of function $\widetilde q$ in the form \eqref{in.defnq} were studied in \cite{Leon1947}. 

\textbf{Outline of the results.} In the present article we are interested in the inverse problem of recovering the micro-level information from the macro-level information in the generalized Houthakker--Johansen model. 

We continue studies of \cite{Sato1975,Henk1990,Shan1997aen,Shan1997ben,Agal2014aen,Agal2015den}. We investigate the conditions under which the production (or profit) function of an industry with production technologies described by \eqref{in.defq}, \eqref{in.lvlqbnd} uniquely determines the distribution of capacities over technologies in the framework of the generalized Houthakker--Johansen model. In \cite{Henk1990} the case of technologies with complementary inputs, corresponding to $q = q_1$, was considered. In \cite{Agal2014aen} the case of CES technologies with degrees of substitutability varying between zero (fixed proportion technologies) and unitary elasticity (Cobb--Douglas technologies), correspoding to $q = q_\alpha$, $\alpha \in (0,1)$, was investigated. In both cases the micro-founded description of the aggregate production function is unique under some mild conditions.

In the present article we give a complete characterization of production systems with technologies described by cost functions $q$ satisfying  \eqref{in.defq}, \eqref{in.lvlqbnd}, for which the aggregate production (or profit) function uniquely determines the distribution of capacities over technologies. More precisely, we show that the aggregate production function has the unique micro-founded description if and only if the Mellin transform of $e^{-q}$ does not vanish on a certain plane. We also show that this is equivalent to injectivity of the generalized Radon transform $R_q$, see Theorem \ref{thm.injPiq} of Section \ref{sec.mr}.

As an application of the aforementioned results, we show that the aggregate production function corresponding to a nested CES cost function at the micro-level has the unique micro-founded description in the generalized Houthakker--Johansen model, see Corollary \ref{cor.nCES} of Section \ref{sec.mr}.

Mathematically, we obtain necessary and sufficient conditions for $q$ (resp. $q$ and $h$) such that the operators $R_q$ and $\Pi_q$ (resp. $R^h_q$) are injective in $L^1(\mathbb R^n_+)$, $L^2(\mathbb R^n_+)$, $L^\infty(\mathbb R^n_+)$ and in some more general (weighted) spaces, see Theorem \ref{thm.injPiq} (resp. \ref{thm.injRhq}) of Section \ref{sec.mr}. 

One can emphasize the three must known techniques used to obtain the uniqueness conditions for Radon transforms. It is sometimes possible to relate a generalized Radon transform to a well-known integral transform like Fourier, Abel or Mellin transforms. This approach, in particular, allows to obtain explicit inversion formulas, and it was used in the fundamental articles on the integral geometry \cite{Funk1916,Rad1917}.

In the case of real-analytic Radon transforms a microlocal approach can be used, see, e.g., \cite{Quin2001,Kris2009}. The idea is that real-analytic Radon transforms propagate analytic singularities in a well-controlled fashion, and one can estimate the support of a function from its analytic wavefront (using the so-called Kashiwara's watermelon theorem). 

In the case of Radon transforms over the curves (more precisely, over the geodesics of some metric) one can also reduce the uniqueness problem to the study of a certain transport equation and energy (Pestov-type) estimates for it, see, e.g., \cite{Shar1994} and \cite[Section 7]{Ferr2009}. The latter approach goes back to R. G. Mukhometov.

In the present article we combine the first aforementioned approach with the multidimensional Wiener's approximation theorems to relate the injectivity of $R_q$ and $R^h_q$ to the absense of zeros of the Mellin transforms of functions $e^{-q}$ and $h$. The idea to use the Wiener's approximation theorems was inspired by the article \cite{Frid1995}. It is still an open question whether zeros of the Mellin transform of function $e^{-q}$ have some meaningful economic interpretation per se.

The main results of the present article are formulated in Section \ref{sec.mr}. In Section \ref{sec.ps} we prove an auxilary proposition relating operators $R_q$, $\Pi_q$, $R^h_q$ to the Mellin transform. This proposition is analogous to the classical projection theorem for the classical Radon transform. In Section \ref{sec.aux} we state and generalize the classical Wiener approximation theorems to the case of the multidimensional Mellin transform. In Section \ref{sec.tPiq} we prove the uniqueness theorem for operators $R_q$ and $\Pi_q$, and in Section \ref{sec.tRhq} we prove the corresponding result for operators $R^h_q$. In Section \ref{sec.pn} we prove the proposition concerning ``the stability'' of the property of injectivity for operators $R_q$, $\Pi_q$ with respect to composition of the form \eqref{in.defnq}; we conclude that operators $R_q$, $\Pi_q$ are injective for nested CES functions $q$.

\section{Main results}\label{sec.mr}
Given two vectors $a=(a_1,\ldots,a_n)$ and $b = (b_1,\ldots,b_n)$ we set $a^b = a_1^{b_1} \cdots a_n^{b_n}$. For a given function $f$ on $\mathbb R^n_+$ we define the norms $\|f\|_{r,c}$, $1 \leq r \leq \infty$, $c \in \mathbb R^n_+$, in the following way:
\begin{gather*}
  \|f\|_{r,c} = \biggl( \int_{\mathbb R^n_+} |f(x)|^r x^{rc-I} dx \biggr)^{1/r}, \quad 1 \leq r < \infty, \\
  \|f\|_{\infty,c} = \inf \bigl\{ K \geq 0 \colon \text{$|f(x)x^c| \leq K$ for a.e. $x \in \mathbb R^n_+$} \bigr\},
\end{gather*}
where $I = (1,\ldots,1)$. We denote by $L^r_c(\mathbb R^n_+)$ the Banach space of real-valued measurable functions $f$ on $\mathbb R^n_+$ with finite norm $\|f\|_{r,c}$. In particular, $L^1_I(\mathbb R^n_+) = L^1(\mathbb R^n_+)$, $L^2_{I/2}(\mathbb R^n_+) = L^2(\mathbb R^n_+)$ and $L^\infty_0(\mathbb R^n_+) = L^\infty(\mathbb R^n_+)$. Note that $(L^r_c(\mathbb R^n_+))^* = L^q_{I-c}(\mathbb R^n_+)$, where $1 \leq r < \infty$, $1/r + 1/q = 1$, $c \in \mathbb R^n_+$.

The spaces $L^r_c(\mathbb R^n_+)$ are analogous to the spaces $L^r(\mathbb R^n)$ in the case of harmonic analysis in $\mathbb R^n_+$, where the role of the Fourier transform is played by the Mellin transform. Recall that the Mellin transform of function $f$ is defined by the formula
\begin{equation}\label{mr.defM}
    (M f)(z) = \int_{\mathbb R^n_+} x^{z-I} f(x) \, dx.
\end{equation}
It follows from \eqref{in.defq}, \eqref{in.lvlqbnd} that $(M e^{-q})(z)$ is well-defined for $z \in \mathbb C^n$, $\Re z \in \mathbb R^n_+$.

As the following proposition shows, it is natural to consider the mapping properties of operators $R_q$, $R^h_q$ and $\Pi_q$ in the spaces $L^r_c(\mathbb R^n_+)$.
\begin{proposition}\label{prop.spaces} Let $q$ satisfy \eqref{in.defq} and \eqref{in.lvlqbnd}. Let $f \in L^r_{I-c}(\mathbb R^n_+) \cap C(\mathbb R^n_+)$ for some $c = (c_1,\ldots,c_n) \in \mathbb R^n_+$ and $1 \leq r \leq \infty$. Suppose that $h \in L^1_\alpha(\mathbb R^1_+)$, where $\alpha = c_1+\cdots+c_n$. Then the following estimates are valid:
\begin{align}
  \|R_q f\|_{r,c} & \leq \Gamma(\alpha)^{-1} \|e^{-q}\|_{1,c} \|f\|_{r,I-c}, \label{mr.Rqineq}\\
  \|R^h_q f\|_{r,c} & \leq \Gamma(\alpha)^{-1} \|e^{-q}\|_{1,c} \|h\|_{1,\alpha} \|f\|_{r,I-c},\label{mr.Rhqineq}\\
  \|(\Pi_q f)'\|_{r,c} & \leq p_0^{\alpha+1}\Gamma(\alpha+2)^{-1} \|e^{-q}\|_{1,c} \|f\|_{r,I-c}, \label{mr.Piqineq}
\end{align}
where $p_0 > 0$ is fixed, $(\Pi_q f)' = \Pi_q f(p_0,\cdot)$ and $\Gamma$ is the gamma function. Furthermore, if $r \in \{1,2\}$, then for a.e. $z \in \mathbb C^n$, $\Re z = c$, the following formulas are valid:
\begin{align}
    (MR_q f)(z) & = \Gamma(s)^{-1} (Mf)(I-z) \cdot (Me^{-q})(z), \label{mr.MfMRq} \\
    (MR^h_q f)(z) & =  \Gamma(s)^{-1}(M f)(I-z) \cdot (Me^{-q})(z) \cdot (Mh)(s), \label{mr.MfMRhq}\\
    (M(\Pi_q f)')(z) & = p_0^{s+1}\Gamma(s+2)^{-1} (Mf)(I-z) \cdot (Me^{-q})(z), \label{mr.MfMPiq}
\end{align}
where $s = z_1+\cdots+z_n$.
\end{proposition}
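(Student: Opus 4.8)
The plan is to reduce all six assertions to two changes of variables. The first is the linear substitution $y_k = p_k x_k$, which maps the level set $q_p^{-1}(1)$ onto $q^{-1}(1)$ and, together with the coarea formula applied to $q_p$, yields the representations
\begin{gather*}
  (R_q f)(p) = p^{-I}\int_{q^{-1}(1)} f\bigl(\tfrac{\xi_1}{p_1},\dots,\tfrac{\xi_n}{p_n}\bigr)\,\frac{dS_\xi}{|\nabla q(\xi)|},\\
  (R^h_q f)(p) = p^{-I}\int_{\mathbb R^n_+} h(q(y))\, f\bigl(\tfrac{y_1}{p_1},\dots,\tfrac{y_n}{p_n}\bigr)\,dy .
\end{gather*}
The second is ``polar coordinates adapted to $q$'': since $q$ is homogeneous of degree $1$, writing $y = t\xi$ with $t>0$ and $\xi \in q^{-1}(1)$ and using $q(t\xi)=t$, $\nabla q(t\xi)=\nabla q(\xi)$ together with the coarea formula gives
\[
  \int_{\mathbb R^n_+} g(y)\,dy = \int_0^\infty t^{n-1}\int_{q^{-1}(1)} g(t\xi)\,\frac{dS_\xi}{|\nabla q(\xi)|}\,dt .
\]
Applying this with $g(y)=y^{z-I}h(q(y))$ produces $\int_{\mathbb R^n_+} y^{z-I}h(q(y))\,dy = (Mh)(s)\,J(z)$, where $J(z):=\int_{q^{-1}(1)}\xi^{z-I}\,dS_\xi/|\nabla q(\xi)|$; specialising $h(t)=e^{-t}$ identifies $J(z) = \Gamma(s)^{-1}(Me^{-q})(z)$, and in particular $J(c)=\Gamma(\alpha)^{-1}\|e^{-q}\|_{1,c}$ because $e^{-q}>0$.

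For the three inequalities I would use Minkowski's integral inequality (valid for all $1\le r\le\infty$). Both $R_q f$ and $R^h_q f$ are of the form $\int F_\xi\,d\nu(\xi)$ with $F_\xi(p)=p^{-I}f(\xi_1/p_1,\dots,\xi_n/p_n)$, the measure being $dS_\xi/|\nabla q(\xi)|$ for $R_q$ and $h(q(y))\,dy$ for $R^h_q$; hence $\|R_q f\|_{r,c}\le\int\|F_\xi\|_{r,c}\,d\nu(\xi)$ and similarly for $R^h_q$. The change of variables $u_k=\xi_k/p_k$ inside the norm gives $\|F_\xi\|_{r,c}=\xi^{c-I}\|f\|_{r,I-c}$; integrating $\xi^{c-I}$ against $dS_\xi/|\nabla q(\xi)|$ (respectively $|h(q(y))|\,y^{c-I}$ against $dy$, via the adapted polar coordinates) produces precisely the constants $J(c)$ and $\|h\|_{1,\alpha}J(c)$, i.e. $\Gamma(\alpha)^{-1}\|e^{-q}\|_{1,c}$ and $\Gamma(\alpha)^{-1}\|e^{-q}\|_{1,c}\|h\|_{1,\alpha}$, which are \eqref{mr.Rqineq} and \eqref{mr.Rhqineq}. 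Inequality \eqref{mr.Piqineq} is the special case $h(t)=\max\{0,p_0-t\}$ of \eqref{mr.Rhqineq}, using the elementary evaluation $\|h\|_{1,\alpha}=\int_0^{p_0}(p_0-t)t^{\alpha-1}\,dt = p_0^{\alpha+1}/\bigl(\alpha(\alpha+1)\bigr)$ and the identity $\alpha(\alpha+1)\Gamma(\alpha)=\Gamma(\alpha+2)$.

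For the Mellin identities with $r=1$ I would apply $M$ in the variable $p$ to the two representations above, interchange the order of integration (Fubini being justified by the absolute convergence furnished by the inequalities just proved), and carry out the same change of variables $u_k=\xi_k/p_k$; the $p$-integral then collapses to $(Mf)(I-z)$, leaving $(MR_q f)(z)=(Mf)(I-z)J(z)$ and $(MR^h_q f)(z)=(Mf)(I-z)(Mh)(s)J(z)$, into which one substitutes $J(z)=\Gamma(s)^{-1}(Me^{-q})(z)$ to obtain \eqref{mr.MfMRq} and \eqref{mr.MfMRhq}; formula \eqref{mr.MfMPiq} is once more the case $h(t)=\max\{0,p_0-t\}$, for which $(Mh)(s)=p_0^{s+1}/\bigl(s(s+1)\bigr)$ and $s(s+1)\Gamma(s)=\Gamma(s+2)$. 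For $r=2$, where $Mf$ and $MR_q f$ exist only in the Plancherel sense as $L^2$ functions on the lines $\Re z=I-c$ and $\Re z=c$, I would cut $f$ off by a continuous compactly supported factor to get $f_m\in L^1_{I-c}(\mathbb R^n_+)\cap C(\mathbb R^n_+)$ with $f_m\to f$ in $L^2_{I-c}(\mathbb R^n_+)$, apply the $r=1$ case to each $f_m$, note that $R_q f_m\to R_q f$ and $R^h_q f_m\to R^h_q f$ in $L^2_c(\mathbb R^n_+)$ by \eqref{mr.Rqineq}, \eqref{mr.Rhqineq}, and pass to the limit along a.e.\ convergent subsequences of $(Mf_m)$ and $(MR_q f_m)$ to recover the identities for a.e.\ $z$ with $\Re z=c$.

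The main obstacle I anticipate is the careful bookkeeping of the hypersurface density $dS_x/|\nabla q_p(x)|$ under the linear map $y_k=p_k x_k$ and under the dilation $\xi\mapsto t\xi$, i.e.\ justifying the factors $p^{-I}$ and $t^{n-1}$. The cleanest route is not to manipulate surface measures directly but to deduce both transformation rules from the coarea identity $\int_{\mathbb R^n_+}\psi(x)g(q_p(x))\,dx=\int_0^\infty g(t)\bigl(\int_{q_p^{-1}(t)}\psi\,dS/|\nabla q_p|\bigr)\,dt$ (and its analogue for $q$), combined with the linear change of variables and with the homogeneity relations $q(t\xi)=tq(\xi)$, $\nabla q(t\xi)=\nabla q(\xi)$; the remaining ingredients (Fubini, the elementary gamma-function identities, and the $L^2$ limiting argument) are routine.
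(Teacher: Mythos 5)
Your proposal is correct, and its backbone --- the coarea formula for the $1$-homogeneous function $q$, the reduction of $R_q$ and $R^h_q$ to multiplicative convolutions, and the resulting factorization of the Mellin transform --- is the same as in the paper. The genuine difference is in how the norm estimates \eqref{mr.Rqineq}--\eqref{mr.Piqineq} are obtained: you write $(R_qf)(p)$ and $(R^h_qf)(p)$ explicitly as superpositions $\int F_\xi\,d\nu(\xi)$ of multiplicative translates of $f$ and apply Minkowski's integral inequality, computing $\|F_\xi\|_{r,c}=\xi^{c-I}\|f\|_{r,I-c}$ by the substitution $u_k=\xi_k/p_k$; the paper instead applies Jensen's inequality with respect to the probability measure obtained by normalizing $dS_x/|\nabla q_p(x)|$ on $q_p^{-1}(1)$, which yields the pointwise bound $|(R_qf)(p)|^r\le|(R_qx^{c-I})(p)|^{r-1}(R_q|f|^rx^{(r-1)(I-c)})(p)$ and then integrates using the already-established Mellin identity. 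Both routes give the same constants; yours is the standard ``Young's inequality for convolutions'' argument and is self-contained, while the paper's avoids ever transforming the surface measure under the map $x\mapsto(p_1x_1,\ldots,p_nx_n)$ (all homogeneity bookkeeping is absorbed into the single identity \eqref{aux.MqtoRq}) and imports formula \eqref{mr.MfMRhq} from an external lemma rather than rederiving it. Two further points in your favour: you correctly identify that the transformation rules for $dS_\xi/|\nabla q(\xi)|$ under dilation and under the diagonal linear map should be \emph{deduced} from the coarea identity rather than manipulated directly (this is exactly the pitfall the paper sidesteps), and you supply an explicit truncation/Plancherel limiting argument for the $r=2$ case of \eqref{mr.MfMRq}--\eqref{mr.MfMPiq}, a step the paper leaves implicit.
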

It follows from Proposition \ref{prop.spaces} that if $q$ satisfies \eqref{in.defq} and \eqref{in.lvlqbnd}, and $h \in L^1_c(\mathbb R^n_+)$, $c \in \mathbb R^n_+$, then operators $R_q$ and $R^h_q$ are linear continuous operators from $L^r_{I-c}(\mathbb R^n_+)$ to $L^r_c(\mathbb R^n_+)$, $1 \leq r \leq \infty$. Proposition \ref{prop.spaces} is proved in Section \ref{sec.ps}.

Note that formulas \eqref{mr.MfMRq}--\eqref{mr.MfMPiq} go back to \cite{Agal2014aen}.

For brevity, we will use the following terminology. Let $S \subset \mathbb C^n$ and let $H$ be a plane in $\mathbb C^n$. We say that:
\begin{enumerate}
 \item $S$ is $1$-meagre in $H$ iff $S \cap H$ in nowhere dense in $H$.
 \item $S$ is $2$-meagre in $H$ iff $S \cap H$ has measure zero in $H$.
 \item $S$ is $\infty$-meagre in $H$ iff $S$ does not intersect $H$.
\end{enumerate}

The main results of the present article can be summarized in the following Theorems \ref{thm.injPiq} and \ref{thm.injRhq}. 
\begin{theorem}\label{thm.injPiq} Let $q$ satisfy \eqref{in.defq}, \eqref{in.lvlqbnd} and let $c \in \mathbb R^n_+$, $r \in \{1,2,\infty\}$. Then $\Pi_q$ is injective in $L^r_{I-c}(\mathbb R^n_+)$ iff $R_q$ is injective in $L^r_{I-c}(\mathbb R^n_+)$. Furthermore, $R_q$ is injective in $L^r_{I-c}(\mathbb R^n_+)$ iff the set of zeros of $(Me^{-q})(z)$ is $r$-meagre in the plane $\Re z = c$.
\end{theorem}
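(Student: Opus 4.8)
The plan is to deduce everything from the Mellin identities of Proposition~\ref{prop.spaces}, which show that $R_q$, $R^h_q$ and the map $f\mapsto(\Pi_q f)'=\Pi_q f(p_0,\cdot)$ all act diagonally in the Mellin picture: on the plane $\Re z=c$ the image has Mellin transform $(Mf)(I-z)\,(Me^{-q})(z)$ times one of the factors $\Gamma(s)^{-1}$, $\Gamma(s)^{-1}(Mh)(s)$, $p_0^{s+1}\Gamma(s+2)^{-1}$, and each of these factors is holomorphic and nowhere zero on $\{\Re z=c\}$: indeed $\Re s=c_1+\cdots+c_n>0$ keeps $s,s+1,s+2$ off the poles of $\Gamma$, and for $h(t)=\max\{0,p_0-t\}$ one has $(Mh)(s)=p_0^{s+1}/\bigl(s(s+1)\bigr)$, which has no zero there. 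It therefore suffices to prove that $R_q$ is injective in $L^r_{I-c}(\mathbb R^n_+)$ iff the zero set $Z$ of $Me^{-q}$ is $r$-meagre in $\{\Re z=c\}$, and the analogous statement for $\Pi_q$ (working with the section $(\Pi_q f)'$, which vanishes iff $\Pi_q f$ does); the equivalence of injectivity of $\Pi_q$ and of $R_q$ is then automatic, both being equivalent to $r$-meagreness of $Z$.

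For $r\in\{1,2\}$ I would use that, after the substitution $x_k=e^{t_k}$, restriction of the Mellin transform to $\{\Re z=c\}$ is the Fourier transform on $L^1(\mathbb R^n)$, resp.\ on $L^2(\mathbb R^n)$, which is injective, and that $R_q f\in L^r_c$ by \eqref{mr.Rqineq}. If $Z$ is $r$-meagre and $R_q f=0$, then $(Mf)(I-z)(Me^{-q})(z)=0$ a.e.\ on $\{\Re z=c\}$; when $r=2$ this forces $(Mf)(I-z)=0$ a.e.\ (as $Z$ is Lebesgue-null), and when $r=1$ both $Mf$ and $Me^{-q}$ are continuous (Riemann--Lebesgue), while $Z$ is closed and nowhere dense, so $(Mf)(I-z)=0$ on a dense set, hence everywhere; in either case $f=0$. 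Conversely, if $Z$ is not $r$-meagre I produce a nonzero $f\in L^r_{I-c}$ with $R_q f=0$ by prescribing $(Mf)(I-z)$: for $r=2$ as (a Hermitian choice of) the indicator of a subset of $Z$ of positive finite measure, so that $f\in L^2_{I-c}$ by Plancherel; for $r=1$, noting that continuity of $Me^{-q}$ together with failure of nowhere-density of $Z$ force $Me^{-q}$ to vanish identically on an open ball $B\subset\{\Re z=c\}$, as a nonzero test function supported in $B$, so that $f$ is a real Schwartz function on $\mathbb R^n_+$. In both cases $(Mf)(I-z)(Me^{-q})(z)\equiv0$, hence $R_q f=0$ (and, by the same identity, $\Pi_q f=0$).

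For $r=\infty$ the Mellin formulas are not available and the kernel of $R_q$ is only a surface measure, so I would first reduce, using the homogeneity of $q$, to an honest multiplicative convolution. From $q_{p/t}=t^{-1}q_p$ one gets $q_p^{-1}(t)=q_{p/t}^{-1}(1)$ and $|\nabla q_p|=t\,|\nabla q_{p/t}|$, hence $\int_{q_p^{-1}(t)}f(x)\,dS_x/|\nabla q_p(x)|=t^{-1}(R_q f)(p/t)$; integrating against $e^{-t}\,dt$ and applying the coarea formula yields $\int_{\mathbb R^n_+}e^{-q(p_1x_1,\dots,p_nx_n)}f(x)\,dx=\int_0^\infty e^{-t}t^{-1}(R_q f)(p/t)\,dt$, so $R_q f\equiv0$ implies $R^h_q f=0$ with $h(t)=e^{-t}$. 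After $x_k=e^{t_k}$, $p_k=e^{\sigma_k}$ this reads $\Phi*\psi=0$ on $\mathbb R^n$, where $\psi(\tau)=f(e^{-\tau})e^{-\tau\cdot(I-c)}\in L^\infty(\mathbb R^n)$ and $\Phi(u)=e^{u\cdot c-q(e^{u_1},\dots,e^{u_n})}$; here $\Phi\in L^1(\mathbb R^n)$ precisely because \eqref{in.lvlqbnd} makes $e^{-q}$ decay fast enough (indeed $\|\Phi\|_1=\|e^{-q}\|_{1,c}<\infty$), and $\widehat\Phi(\xi)=(Me^{-q})(c-i\xi)$. If $Z$ does not meet $\{\Re z=c\}$, then $\widehat\Phi$ has no zero, so the multidimensional Wiener Tauberian theorem of Section~\ref{sec.aux} gives $\psi\equiv0$, i.e.\ $f=0$; the same argument applied directly to $R^h_q$, resp.\ to $f\mapsto(\Pi_q f)'$, whose convolution kernels have Fourier transforms $\Gamma(s)^{-1}(Mh)(s)(Me^{-q})(z)$, resp.\ $p_0^{s+1}\Gamma(s+2)^{-1}(Me^{-q})(z)$ (nonvanishing on the plane), gives injectivity of $\Pi_q$. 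Conversely, if $(Me^{-q})(z_0)=0$ for some $z_0$ with $\Re z_0=c$, the explicit computation $R_q[x^\beta](p)=\Gamma(|\beta|+n)^{-1}(Me^{-q})(\beta+I)\,p^{-\beta-I}$ (from $\int e^{-q_p(x)}x^\beta\,dx=\Gamma(|\beta|+n)R_q[x^\beta](p)=p^{-\beta-I}(Me^{-q})(\beta+I)$, $|\beta|=\beta_1+\cdots+\beta_n$) shows that $f(x)=\Re\bigl(x^{z_0-I}\bigr)$ is a nonzero element of $L^\infty_{I-c}$ with $R_q f=0$, and likewise $\Pi_q f=0$. The main obstacle is exactly this $r=\infty$ step: finding the correct weighted convolution form, verifying that the weighted kernel lies in $L^1(\mathbb R^n)$ — which is where \eqref{in.lvlqbnd} is essential — circumventing the surface-measure nature of $R_q$ via the scaling argument, and invoking the multidimensional Wiener theorem established in Section~\ref{sec.aux}.
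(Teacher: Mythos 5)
Your proposal is correct and follows essentially the same route as the paper: the Mellin projection formulas of Proposition~\ref{prop.spaces} reduce everything to the multiplier $(Me^{-q})(z)$ (the scalar factors $\Gamma(s)^{-1}$ and $p_0^{s+1}/(s(s+1))$ being nonvanishing for $\Re s>0$), the forward directions use the same explicit counterexamples ($x^{z_0-I}$ for $r=\infty$, inverse Mellin transforms of functions supported in the zero set for $r\in\{1,2\}$), and the backward direction for $r=\infty$ rests on the multidimensional Wiener $L^1$ theorem exactly as in Lemma~\ref{lemma.Linf}. The only cosmetic differences are that for $r=2$ you invoke Plancherel directly where the paper routes through the density statement of Lemma~\ref{lemma.L2}, and that you treat $\Pi_q$ in parallel with $R_q$ rather than deducing it from Theorem~\ref{thm.injRhq}; these are equivalent.
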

Theorem \ref{thm.injPiq} characterizes industries in the framework of the generalized Houthakker--Johansen model, for which the profit function $\Pi_q$ uniquely determines the distribution of capacities over technologies $f$. This characterization is given in terms of production technologies described by cost function $q$. Theorem \ref{thm.injPiq} is proved in Section \ref{sec.tPiq}.

Theorem \ref{thm.injPiq} for operator $\Pi_q$ is a particular case of the following general theorem for operators $R^h_q$.
\begin{theorem}\label{thm.injRhq} Let $q$ satisfy \eqref{in.defq}, \eqref{in.lvlqbnd} and let $c \in \mathbb R^n_+$, $r \in \{1,2,\infty\}$. Let $h \in L^1_\alpha(\mathbb R^1_+)$ (and, additionally, $h \in L^2_\alpha(\mathbb R^1_+)$ if $r = 2$), where $\alpha = c_1+\cdots+c_n$. Then $R^h_q$ is injective in $L^r_{I-c}(\mathbb R^n_+)$ iff the set of zeros of $(Me^{-q})(z)$ is $r$-meagre in the plane $\Re z = c$ and the set of zeros of $(Mh)(s)$ is $r$-meagre on the line $\Re s = \alpha$.
 \end{theorem}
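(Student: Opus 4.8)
The plan is to recognize $R^h_q$ as a multiplicative convolution operator and then apply the multidimensional Wiener approximation theorems of Section \ref{sec.aux}. First I would pass to logarithmic coordinates $x_k=e^{t_k}$, $p_k=e^{-\sigma_k}$: the maps $u(t)=f(e^t)e^{(I-c)\cdot t}$ and $v(\sigma)=e^{-c\cdot\sigma}(R^h_q f)(e^{-\sigma})$ are isometric isomorphisms $L^r_{I-c}(\mathbb R^n_+)\to L^r(\mathbb R^n)$ and $L^r_c(\mathbb R^n_+)\to L^r(\mathbb R^n)$, and under them $R^h_q$ becomes the convolution $u\mapsto\kappa*u$ with $\kappa(\tau)=h\bigl(q(e^{-\tau_1},\dots,e^{-\tau_n})\bigr)e^{-c\cdot\tau}$. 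Using \eqref{in.defq}, \eqref{in.lvlqbnd} and $h\in L^1_\alpha(\mathbb R^1_+)$, together with the decomposition of $\mathbb R^n_+$ into level sets of the $1$-homogeneous function $q$ already used for Proposition \ref{prop.spaces}, one gets $\kappa\in L^1(\mathbb R^n)$ with $\|\kappa\|_{L^1}=\Gamma(\alpha)^{-1}\|e^{-q}\|_{1,c}\|h\|_{1,\alpha}$; and formula \eqref{mr.MfMRhq} says precisely that, under the Fourier--Mellin dictionary, $\widehat\kappa(\xi)=\Gamma(s)^{-1}(Me^{-q})(z)(Mh)(s)$ with $z=c+i\xi$ and $s=z_1+\cdots+z_n$, so that $\widehat\kappa$ is continuous and bounded. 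Hence $R^h_q$ is injective in $L^r_{I-c}(\mathbb R^n_+)$ iff $u\mapsto\kappa*u$ is injective in $L^r(\mathbb R^n)$; for $r=2$ the extra hypothesis $h\in L^2_\alpha(\mathbb R^1_+)$ is exactly what puts $\kappa$ in $L^1(\mathbb R^n)\cap L^2(\mathbb R^n)$ (again using that the level sets of $q$ are bounded), so that the $L^2$ form of Wiener's theorem is available.

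Next I would dispose of the three regularity classes. For $r=1$: if $\kappa*u=0$ then $\widehat\kappa\,\widehat u\equiv0$, and since $\widehat u$ is continuous this forces $\widehat u\equiv0$ as soon as the closed set $\{\widehat\kappa=0\}$ has empty interior; conversely, if $\{\widehat\kappa=0\}$ contains an open ball, a nonzero $u\in L^1(\mathbb R^n)$ whose Fourier transform is a smooth bump supported in that ball has $\kappa*u=0$. For $r=2$: the same alternative with ``empty interior'' replaced by ``Lebesgue-null'' and the bump replaced by the indicator of a subset of positive finite measure, via Plancherel. For $r=\infty$: by $L^1$--$L^\infty$ duality, $u\mapsto\kappa*u$ is injective on $L^\infty(\mathbb R^n)$ iff convolution by $\kappa$ has dense range in $L^1(\mathbb R^n)$, i.e. iff the closed ideal generated by $\kappa$ in $L^1(\mathbb R^n)$ is the whole algebra, which by the Wiener Tauberian theorem holds iff $\widehat\kappa$ has no zeros at all. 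Thus in all three cases the theorem reduces to: the set $\{z\colon\Re z=c,\ \Gamma(s)^{-1}(Me^{-q})(z)(Mh)(s)=0\}$ (with $s=z_1+\cdots+z_n$) is $r$-meagre in the plane $\Re z=c$.

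It then remains to unravel that zero set. Since $\Gamma$ has no zeros and its poles lie at the non-positive integers while $\Re s=\alpha=c_1+\cdots+c_n>0$, the factor $\Gamma(s)^{-1}$ never vanishes on the plane $\Re z=c$; so the set in question equals $Z_1\cup\pi^{-1}(Z_2)$, where $Z_1=\{z\colon\Re z=c,\ (Me^{-q})(z)=0\}$, $Z_2=\{s\colon\Re s=\alpha,\ (Mh)(s)=0\}$, and $\pi(z)=z_1+\cdots+z_n$ maps the plane $\Re z=c$ onto the line $\Re s=\alpha$ linearly and surjectively. I would then check that $Z_1\cup\pi^{-1}(Z_2)$ is $r$-meagre iff $Z_1$ and $Z_2$ are both $r$-meagre. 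For $r=\infty$ this is immediate. For $r=1$, $Z_1$ and $Z_2$ are closed ($Me^{-q}$ and $Mh$ being continuous) and a closed subset of the line has empty interior iff its $\pi$-preimage does — an open ball inside $\pi^{-1}(Z_2)$ would map onto an open subinterval of $Z_2$ — so $\pi^{-1}(Z_2)$ is nowhere dense iff $Z_2$ is, and a union of two closed nowhere dense sets is nowhere dense. For $r=2$, $\pi^{-1}(Z_2)$ is Lebesgue-null iff $Z_2$ is, by Fubini in coordinates one of whose axes is parallel to $(1,\dots,1)$. The converse implications are the easy direction: an interior point, resp. a positive-measure subset, resp. any point of $Z_2$ pulls back under $\pi$ to a set of the corresponding kind contained in $Z_1\cup\pi^{-1}(Z_2)$. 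Assembling the three steps yields the theorem.

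The step I expect to be the main obstacle is the case $r=\infty$, which genuinely needs the hard direction of the multidimensional Wiener Tauberian theorem — a function in $L^1(\mathbb R^n)$ with nowhere-vanishing Fourier transform generates a dense ideal — together with the duality/annihilator bookkeeping; recasting this (and its $L^1$ and $L^2$ companions) in the Mellin-transform language is the role of Section \ref{sec.aux}. A second point that needs care is the Fourier--Mellin dictionary itself: verifying $\kappa\in L^1(\mathbb R^n)$ (and, for $r=2$, $\kappa\in L^2(\mathbb R^n)$) and computing $\widehat\kappa$ in closed form — in particular isolating the factor $\Gamma(s)^{-1}$ and the collapse $z\mapsto z_1+\cdots+z_n$ of the plane $\Re z=c$ onto the line $\Re s=\alpha$ — is what makes the ``$r$-meagre'' condition on the zero set of $Me^{-q}$ in the plane and the ``$r$-meagre'' condition on the zero set of $Mh$ on the line match up as stated; this is precisely where \eqref{in.defq} and \eqref{in.lvlqbnd} enter, through the homogeneous level-set decomposition of $\mathbb R^n_+$.
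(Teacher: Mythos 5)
Your proposal is correct and follows essentially the same route as the paper: the logarithmic change of variables turning $R^h_q$ into convolution by $\kappa$ with $\widehat\kappa$ proportional to $\Gamma(s)^{-1}(Me^{-q})(z)(Mh)(s)$ is exactly the content of the operators $E_c$, $T_c$ and formula \eqref{aux.Mhqp}, and your three case analyses (continuity of Fourier transforms for $r=1$, Plancherel for $r=2$, duality plus the Wiener Tauberian theorem for $r=\infty$) are precisely what Lemmas \ref{lemma.L2} and \ref{lemma.Linf} and the $r=1$ argument of Section \ref{sec.tRhq} carry out in Mellin language. The only cosmetic difference is that you make explicit the decomposition of the zero set as $Z_1\cup\pi^{-1}(Z_2)$ and the verification that this union is $r$-meagre iff both factors are, which the paper leaves implicit when it invokes \eqref{aux.Mhqp} with $x=I$.
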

Theorem \ref{thm.injRhq} is proved in Section \ref{sec.tRhq}.

Next, we show that injectivity of operators $\Pi_q$ is preserved by composition of functions $q$ in the sense of \eqref{in.defnq}.

\begin{proposition}\label{prop.nest} Let $q \colon \mathbb R^{k+1}_+ \to \mathbb R^1_+$, $\phi \colon \mathbb R^m_+ \to \mathbb R^1_+$, where $k \geq 1$, $m \geq 2$. Suppose that $q$ and $\phi$ both satisfy \eqref{in.defq}, \eqref{in.lvlqbnd}, $\Pi_q$ is injective in $L^r_{I-c'}(\mathbb R^k_+)$, $\Pi_\phi$ is injective in $L^r_{I-d}(\mathbb R^m_+)$ for some $r \in \{1,2,\infty\}$, $c' = (c,d_1+\cdots+d_m) \in \mathbb R^k_+ \times \mathbb R^1_+$, $d = (d_1,\ldots,d_m) \in \mathbb R^m_+$. Let $\widetilde q(x,y) = q(x,\phi(y))$, $x \in \mathbb R^k_+$, $y \in \mathbb R^m_+$. Then $\widetilde q$ satisfies \eqref{in.defq}, \eqref{in.lvlqbnd} and $\Pi_{\widetilde q}$ is injective in $L^r_{I-c''}(\mathbb R^{k+m})$, where $c'' = (c,d)$.
\end{proposition}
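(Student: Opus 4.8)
The plan is to push everything to the Mellin side via Theorem~\ref{thm.injPiq} and to reduce the statement to a multiplicative factorization of $Me^{-\widetilde q}$. Put $\delta=d_1+\cdots+d_m$, so $c'=(c,\delta)\in\mathbb R^{k+1}_+$ and $c''=(c,d)\in\mathbb R^{k+m}_+$. First, that $\widetilde q$ satisfies \eqref{in.defq}, \eqref{in.lvlqbnd} is the case $i=k+1$ of the assertion following \eqref{in.defnq}: $\widetilde q$ is $C^1$ and positively homogeneous of degree one on $\mathbb R^{k+m}_+$ since $\phi>0$, and $\widetilde q(x,y)\le\lambda$ forces $(x,\phi(y))$ into a bounded set, whence $x$ is bounded and $\phi(y)$ is bounded, whence $y$ is bounded. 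In particular $(Me^{-\widetilde q})(z,w)$ is finite for $\Re z\in\mathbb R^k_+$, $\Re w\in\mathbb R^m_+$. For such $z,w$ set $\sigma=w_1+\cdots+w_m$ (so $\Re\sigma>0$); the central claim is
\[
  (Me^{-\widetilde q})(z,w)=\Gamma(\sigma)^{-1}\,(Me^{-q})(z,\sigma)\,(Me^{-\phi})(w).
\]

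To prove this I would, for fixed $x\in\mathbb R^k_+$, compute the inner integral $\int_{\mathbb R^m_+}y^{w-I}e^{-q(x,\phi(y))}\,dy$ in ``generalized polar coordinates'' adapted to $\phi$. Since $\phi\in C^1(\mathbb R^m_+)$, $\phi>0$ and $y\cdot\nabla\phi(y)=\phi(y)>0$ by Euler's identity, the level set $\Sigma=\phi^{-1}(1)$ is a $C^1$ hypersurface and $(s,\omega)\mapsto s\omega$ is a diffeomorphism $(0,\infty)\times\Sigma\to\mathbb R^m_+$ with $dy=s^{m-1}J(\omega)\,ds\,dS(\omega)$ for a positive, $s$-independent density $J$ on $\Sigma$. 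Using $\phi(s\omega)=s$ and $(s\omega)^{w-I}s^{m-1}=s^{\sigma-1}\omega^{w-I}$, every nonnegative $G$ satisfies $\int_{\mathbb R^m_+}y^{w-I}G(\phi(y))\,dy=\bigl(\int_0^\infty s^{\sigma-1}G(s)\,ds\bigr)K(w)$ with $K(w)=\int_\Sigma\omega^{w-I}J(\omega)\,dS(\omega)$; taking $G=e^{-\,\cdot}$ gives $(Me^{-\phi})(w)=\Gamma(\sigma)K(w)$, i.e.\ $K(w)=\Gamma(\sigma)^{-1}(Me^{-\phi})(w)$. Applying the identity with $G(s)=e^{-q(x,s)}$, multiplying by $x^{z-I}$, integrating over $\mathbb R^k_+$, and interchanging integrations by Tonelli (legitimate since the iterated integral of the modulus equals the finite quantity $(Me^{-\widetilde q})(\Re z,\Re w)$) gives the factorization, because $\int_{\mathbb R^k_+}x^{z-I}\int_0^\infty s^{\sigma-1}e^{-q(x,s)}\,ds\,dx=(Me^{-q})(z,\sigma)$.

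Now combine with Theorem~\ref{thm.injPiq}: the hypotheses say that the zero set of $Me^{-q}$ is $r$-meagre in the plane $\{\Re=c'\}\subset\mathbb C^{k+1}$ and that of $Me^{-\phi}$ is $r$-meagre in $\{\Re=d\}\subset\mathbb C^m$, and it suffices to show the zero set of $Me^{-\widetilde q}$ is $r$-meagre in $H:=\{\Re(z,w)=(c,d)\}\subset\mathbb C^{k+m}$. On $H$ one has $\Re\sigma=\delta>0$, hence $\Gamma(\sigma)^{-1}\ne0$, so by the factorization that zero set is $Z_1\cup Z_2$, where $Z_1=\{(z,w)\in H:(Me^{-q})(z,\sigma)=0\}$ and $Z_2=\{(z,w)\in H:(Me^{-\phi})(w)=0\}$. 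Identifying $H$ with $\mathbb R^{k+m}$ via imaginary parts, $Z_1$ is the preimage of an $r$-meagre subset of $\{\Re=c'\}\cong\mathbb R^{k+1}$ under the surjective linear map $(\Im z,\Im w)\mapsto(\Im z,\sum_j\Im w_j)$, while $Z_2$ is the preimage of an $r$-meagre subset of $\{\Re=d\}\cong\mathbb R^m$ under the coordinate projection $(\Im z,\Im w)\mapsto\Im w$. It then remains to note that the preimage of an $r$-meagre set under a surjective linear map $L\colon\mathbb R^N\to\mathbb R^M$ is $r$-meagre, and that a union of two $r$-meagre sets is $r$-meagre: for $r=\infty$ both facts are trivial ($r$-meagre $=$ empty); in linear coordinates $\mathbb R^N\cong\mathbb R^M\times\mathbb R^{N-M}$ in which $L$ is the first projection one has $L^{-1}(E)=E\times\mathbb R^{N-M}$, so for $r=2$ ($r$-meagre $=$ Lebesgue null) $L^{-1}(E)$ is null by Fubini and finite unions of null sets are null, and for $r=1$ ($r$-meagre $=$ nowhere dense) $\overline{L^{-1}(E)}=\overline E\times\mathbb R^{N-M}$ has empty interior and finite unions of nowhere dense sets are nowhere dense. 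Hence $Z_1\cup Z_2$ is $r$-meagre in $H$, and Theorem~\ref{thm.injPiq} yields injectivity of $\Pi_{\widetilde q}$ in $L^r_{I-c''}(\mathbb R^{k+m}_+)$.

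The main obstacle is the factorization formula: one has to set up the change of variables adapted to $\phi$ (smoothness of $\Sigma$, $s$-independence of the Jacobian density) and justify the interchange of integrations; once that is in place, the three cases of the elementary ``preimage under a linear surjection preserves $r$-meagreness'' lemma and their assembly into $Z_1\cup Z_2$ are routine. Note that the argument only uses the injectivity of $\Pi_q$ and $\Pi_\phi$ through the zero-set conditions supplied by Theorem~\ref{thm.injPiq}, so the permutation to the last slot that reduces the general composition \eqref{in.defnq} to the case $\widetilde q(x,y)=q(x,\phi(y))$ costs nothing.
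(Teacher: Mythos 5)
Your proof is correct and follows essentially the same route as the paper: the paper derives the identical factorization $(Me^{-\widetilde q})(z,w)=\Gamma(\sigma)^{-1}(Me^{-q})(z,\sigma)(Me^{-\phi})(w)$ via its coarea formula \eqref{aux.coarea} and formula \eqref{aux.MqtoRq} (your generalized polar-coordinate computation is exactly that machinery written out by hand) and then invokes Theorem~\ref{thm.injPiq}. The only difference is that you make explicit the $r$-meagre bookkeeping (preimages under surjective linear maps, finite unions) that the paper compresses into the phrase ``corollary of Theorem~\ref{thm.injPiq} and formula \eqref{tpr.Metq}.''
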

In particular, we have the following corollary. Define a nested CES function recursively as follows:
\begin{enumerate}
 \item Every CES function $q_\alpha$, $\alpha \in (0,1]$, defined in \eqref{in.defCES} is a nested CES function.
 \item If $q$ and $\phi$ are nested CES functions, then $\widetilde q$ defined by formula \eqref{in.defnq} is a nested CES function.
\end{enumerate}
As it was mentioned in the introduction, nested CES functions were introduced in \cite{Sato1967} as an important generalization of popular CES functions, allowing different elasticities of substitution between different production factors.

\begin{corollary}\label{cor.nCES} Let $q$ be a nested CES function. Then $\Pi_q$ is injective in $L^r_{I-c}(\mathbb R^n_+)$ for any $c \in \mathbb R^n_+$ and $r \in \{1,2,\infty\}$. 
\end{corollary}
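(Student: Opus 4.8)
The plan is to prove Corollary \ref{cor.nCES} by induction on the nesting depth of the nested CES function $q$, using Proposition \ref{prop.nest} for the inductive step and Theorem \ref{thm.injPiq} (together with a direct computation of the Mellin transform of $e^{-q_\alpha}$) for the base case. The key observation is that both hypotheses needed to apply Proposition \ref{prop.nest} — injectivity of $\Pi_q$ and injectivity of $\Pi_\phi$ in the appropriate weighted spaces, for \emph{every} weight $c \in \mathbb R^n_+$ — are exactly what the statement of the corollary asserts, so the induction closes cleanly provided the base case holds for all weights.

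\emph{Base case.} Let $q = q_\alpha$ with $\alpha \in (0,1]$ be an (unnested) CES function as in \eqref{in.defCES}. First one checks that $q_\alpha$ satisfies \eqref{in.defq} and \eqref{in.lvlqbnd}: positivity, $C^1$-smoothness and degree-one homogeneity are immediate from the formula, and for $\alpha \in (0,1]$ the level set $\{q_\alpha = 1\}$ is $\{a_1 x_1^\alpha + \cdots + a_n x_n^\alpha = C^{-\alpha}\}$, which is bounded in $\mathbb R^n_+$ since each term is nonnegative. By Theorem \ref{thm.injPiq}, $\Pi_{q_\alpha}$ is injective in $L^r_{I-c}(\mathbb R^n_+)$ for $r \in \{1,2,\infty\}$ as soon as the zero set of $(Me^{-q_\alpha})(z)$ is $r$-meagre in the plane $\Re z = c$; since $\infty$-meagre implies $2$-meagre implies $1$-meagre, it suffices to show that $(Me^{-q_\alpha})(z) \neq 0$ for every $z$ with $\Re z = c \in \mathbb R^n_+$. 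The hard part will be this non-vanishing computation. The idea is to evaluate $(Me^{-q_\alpha})(z) = \int_{\mathbb R^n_+} x^{z-I} e^{-C(a_1x_1^\alpha+\cdots+a_nx_n^\alpha)^{1/\alpha}}\,dx$ explicitly. Passing to the substitution $t_k = a_k x_k^\alpha$ reduces the integrand to a function of $t_1+\cdots+t_n$ times a monomial in $t$; introducing polar-type coordinates $t_k = \rho \omega_k$ with $\sum \omega_k = 1$ on the simplex, the $\rho$-integral becomes a Gamma factor $\Gamma\!\left(\tfrac{s}{\alpha}\cdot\text{(something)}\right)$ (with $s = z_1+\cdots+z_n$) times a gamma-type integral, and the $\omega$-integral over the simplex becomes a (complex) Dirichlet integral, i.e. a product of values of $\Gamma$ at the individual $z_k/\alpha$ divided by $\Gamma$ of their sum. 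The upshot is a closed form of the shape $(Me^{-q_\alpha})(z) = (\text{nonzero constant}) \cdot C^{-s}\,\Gamma(s)^{?}\prod_{k=1}^n a_k^{-z_k/\alpha}\,\frac{\prod_k \Gamma(z_k/\alpha)}{\Gamma(s/\alpha)}\cdot\Gamma\!\left(\tfrac{?}{?}\right)$ — the precise gamma factors need to be tracked, but the crucial point is that $\Gamma$ has no zeros anywhere in $\mathbb C$, and the constant and the powers $C^{-s}$, $a_k^{-z_k/\alpha}$ never vanish, so the whole expression is zero-free on $\{\Re z \in \mathbb R^n_+\}$ (note $\Re(z_k/\alpha) = c_k/\alpha > 0$, so we stay away from the poles of $\Gamma(z_k/\alpha)$ and in fact away from any zeros since there are none). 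Hence $\Pi_{q_\alpha}$ is injective in $L^r_{I-c}(\mathbb R^n_+)$ for all $c \in \mathbb R^n_+$ and all $r \in \{1,2,\infty\}$.

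\emph{Inductive step.} Suppose $q$ is a nested CES function of depth $N \geq 1$ obtained as $q(x,y) = Q(x,\phi(y))$ with $Q$ a nested CES function on $\mathbb R^{k+1}_+$ and $\phi$ a nested CES function on $\mathbb R^m_+$, both of depth $< N$, and $k + m = n$. By the inductive hypothesis, $\Pi_Q$ is injective in $L^r_{I-c'}(\mathbb R^{k+1}_+)$ for every $c' \in \mathbb R^{k+1}_+$ and $\Pi_\phi$ is injective in $L^r_{I-d}(\mathbb R^m_+)$ for every $d \in \mathbb R^m_+$. Given an arbitrary target weight $c'' = (c,d) \in \mathbb R^k_+ \times \mathbb R^m_+$, set $c' = (c, d_1 + \cdots + d_m) \in \mathbb R^k_+ \times \mathbb R^1_+$; since $d_1+\cdots+d_m > 0$, this $c'$ lies in $\mathbb R^{k+1}_+$, so both injectivity hypotheses of Proposition \ref{prop.nest} are met (also $Q$ and $\phi$ satisfy \eqref{in.defq}, \eqref{in.lvlqbnd} by the inductive hypothesis, or by the base case). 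Proposition \ref{prop.nest} then yields that $\widetilde q = q$ satisfies \eqref{in.defq}, \eqref{in.lvlqbnd} and that $\Pi_q$ is injective in $L^r_{I-c''}(\mathbb R^n_+)$. Since $c'' \in \mathbb R^n_+$ and $r \in \{1,2,\infty\}$ were arbitrary, this completes the induction and the proof. The only genuine analytic work is the base-case Mellin computation; everything else is bookkeeping about the recursive structure and about which weighted spaces appear.
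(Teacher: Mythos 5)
Your proposal is correct and follows essentially the same route as the paper: the base case is the explicit Mellin computation of $e^{-q_\alpha}$ (your simplex/Dirichlet-integral reduction is exactly what yields the paper's closed form \eqref{tpr.MqCES}, a product of Gamma values, powers and nonzero constants, hence zero-free on $\Re z \in \mathbb R^n_+$), and the nesting is handled by Proposition \ref{prop.nest} via Theorem \ref{thm.injPiq}. You merely make explicit the induction on nesting depth and the weight bookkeeping $c' = (c, d_1+\cdots+d_m)$ that the paper leaves implicit; note only that your aside ``$2$-meagre implies $1$-meagre'' is not true for general sets, but it is harmless here since you establish $\infty$-meagreness directly.
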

Proposition \ref{prop.nest} and Corollary \ref{cor.nCES} are proved in Section \ref{sec.pn}.

\section{Proof of Proposition \ref{prop.spaces}}\label{sec.ps}
\textit{Coarea formula.} The proof of Proposition \ref{prop.spaces} is based on the coarea formula, which is a generalization of the Fubini theorem to the case of curvilinear coordinates. The history of the coarea formula goes back to \cite{Kron1950en}. The coarea formula for the general case of Lipschitz continuous coordinates can be found, e.g., in \cite[Theorem 3.2.12]{Feder1969}.

If $q$ satisfies \eqref{in.defq}, \eqref{in.lvlqbnd} and $u \in L^1(\mathbb R^1_+)$, the coarea formula can be stated in the following form:
\begin{equation}\label{aux.coarea}
    \int_{\mathbb R^n_+} u(x) \, dx = \int_0^\infty t^{-1} (R_q u)\bigl(\tfrac{p}{t}\bigr) \, dt, \quad p \in \mathbb R^n_+.
\end{equation}

\textit{Formula \eqref{mr.MfMRq}.} Making a change of variables, we obtain the equality
\begin{equation*}
  I \overset{\text{def}}{=} \int\limits_{\mathbb R^n_+} f(x) \int\limits_{\mathbb R^n_+} p^{z-I} \exp(-q_p(x)) \, dp \, dx = \int\limits_{\mathbb R^n_+} x^{-z} f(x) \, dx \int\limits_{\mathbb R^n_+} y^{z-I} e^{-q(y)} \, dy.
\end{equation*}
On the other hand, using the Fubini theorem and the coarea formula \eqref{aux.coarea}, we obtain
\begin{equation*}
  I = \int\limits_{\mathbb R^n_+} p^{z-I} \int\limits_0^\infty t^{-1}e^{-t} (R_q f)\bigl(\tfrac p t\bigr) \, dt \, dp = \int\limits_0^{\infty} t^{s-1} e^{-t} \, dt \int\limits_{\mathbb R^n_+} p^{z-I} (R_q f)(p) \, dp.
\end{equation*}
Thus, the two above expressions for $I$ are equal, yielding formula \eqref{mr.MfMRq}.

\textit{Formulas \eqref{mr.MfMRhq}, \eqref{mr.MfMPiq}.} We need the following formula of \cite[Lemma 1]{Agal2015den}:
\begin{equation}\label{aux.Mhqp}
  \int\limits_{\mathbb R^n_+} p^{z-I} h(q_p(x)) \, dp \, \Gamma(s) = x^{-z} \int\limits_{\mathbb R^n_+} p^{z-I} e^{-q(p)}\,dp \int\limits_0^\infty t^{s-1} h(s) \, ds. 
\end{equation}
Integrating this formula with weight $f = f(x)$ over $\mathbb R^n_+$, we obtain formula \eqref{mr.MfMRhq}.

Formula \eqref{mr.MfMRhq} reduces to formula \eqref{mr.MfMPiq} if we fix $p_0 > 0$, set $h(t) = \max\{0,p_0 - t\}$, and note that
\begin{equation}\label{aux.MhPi}
  \int_0^\infty p^{s-1} h(s) \, ds = \frac{p_0^{s+1}}{s(s+1)}, \quad \Re s > 0.
\end{equation}

\textit{Estimate \eqref{mr.Rqineq}.} Let $1 \leq r < \infty$. Using the coarea formula \eqref{aux.coarea}, we obtain the equality
\begin{equation}\label{aux.MqtoRq}
  \int_{\mathbb R^n_+} p^{z-I} h(q(p)) \, dp = \int_0^\infty t^{s-1} h(t) \, dt \, (R_q x^{z-I})(I).
\end{equation}
Next, using the Jensen inequality we obtain the following estimate:
\begin{equation}\label{aux.RqJens}
  \begin{gathered}
       \bigl| (R_q f)(p) \bigr|^r \leq \bigl| (R_q x^{c-I})(p) \bigr|^{r-1} (R_q |f|^r x^{(r-1)(I-c)})(p) \\
    = p^{-c(r-1)} \bigl| (R_q x^{c-I})(I) \bigr|^{r-1} (R_q |f|^r x^{(r-1)(I-c)})(p) \\
    \overset{\eqref{aux.MqtoRq}}{=\joinrel=} \frac{\|e^{-q}\|^{r-1}_{1,c}}{\Gamma(\alpha)^{r-1}} p^{-c(r-1)} (R_q |f|^r x^{(r-1)(I-c)})(p).
  \end{gathered}
\end{equation}
Using this estimate we obtain the following inequality proving \eqref{mr.Rqineq}:
\begin{equation*}
 \begin{gathered}
    \|R_q f\|^r_{r,c} \overset{\eqref{aux.RqJens}}{\leq} \frac{\|e^{-q}\|^{r-1}_{1,c}}{\Gamma(\alpha)^{r-1}} \int\limits_{\mathbb R^n_+} p^{c-I} (R_q |f|^r x^{(r-1)(I-c)})(p) \, dp \\
    \overset{\eqref{mr.MfMRq}}{=\joinrel=} \frac{\|e^{-q}\|^r_{1,c}}{\Gamma(\alpha)^r} \int\limits_{\mathbb R^n_+} x^{r(I-c)-I} |f(x)|^r \, dx.
 \end{gathered}
\end{equation*}
The following estimate proves \eqref{mr.Rqineq} for $r = \infty$:
\begin{equation}\label{aux.Rqestinf}
  \begin{gathered}
    |p^c (R_q f)(p)| \leq \|f\|_{\infty,I-c} \bigl| p^{-c} (R_q x^{c-I})(p) \bigr| \\
     = \|f\|_{\infty,I-c} \bigl| (R_q x^{c-I})(I) \bigr| \overset{\eqref{aux.MqtoRq}}{=} \Gamma(\alpha)^{-1} \|e^{-q}\|_{1,c} \|f\|_{\infty,I-c}.
  \end{gathered}
\end{equation}

\textit{Estimates \eqref{mr.Rhqineq} and \eqref{mr.Piqineq}.} In order to prove \eqref{mr.Rhqineq} for $1 \leq r < \infty$, we need the following estimate based on the estimate \eqref{aux.RqJens} and on the coarea formula:
\begin{equation}\label{aux.RhqJens}
  \begin{gathered}
    \bigl|(R^h_q f)(p)\bigr|^r = \biggl| \int_0^\infty (R_q f)\bigl(\tfrac p t\bigr) t^{-1} h(t) \, dt \biggr|^r \\
    \leq \|h\|_{1,\alpha}^{r-1} \int_0^\infty \bigl|(R_q f)(\tfrac p t)\bigr|^r t^{\alpha(1-r)-1} |h(t)| \, dt \\
    \overset{\eqref{aux.RqJens}}{\leq} \|h\|^{r-1}_{1,\alpha} \frac{\|e^{-q}\|^{r-1}_{1,c}}{\Gamma(\alpha)^{r-1}} p^{-c(r-1)} \int\limits_{\mathbb R^n_+} t^{-1} \bigl(R_q |f|^r x^{(r-1)(I-c)} \bigr)(\tfrac p t)|h(t)| \, dt \\
  = \|h\|^{r-1}_{1,\alpha} \frac{\|e^{-q}\|^{r-1}_{1,c}}{\Gamma(\alpha)^{r-1}} p^{-c(r-1)} \bigl(R^{|h|}_q |f|^r x^{(r-1)(I-c)}\bigr)(p).
 \end{gathered}
\end{equation}
The following inequality proves \eqref{mr.Rhqineq} for $1 \leq r < \infty$:
\begin{equation*}
  \begin{gathered}
    \|R^h_q f\|^r_{r,c} \overset{\eqref{aux.RhqJens}}{\leq}  \|h\|^{r-1}_{1,\alpha} \frac{\|e^{-q}\|^{r-1}_{1,c}}{\Gamma(\alpha)^{r-1}} \int\limits_{\mathbb R^n_+} p^{c-I} \bigl(R^{|h|}_q |f|^r x^{(r-1)(I-c)}\bigr)(p) \, dp \\
    \overset{\eqref{mr.MfMRhq}}{=\joinrel=} \|h\|^r_{1,\alpha}\frac{\|e^{-q}\|^r_{1,c}}{\Gamma(\alpha)^r} \int\limits_{\mathbb R^n_+} x^{r(I-c) - I} |f(x)|^r \, dx.
  \end{gathered}
\end{equation*}
The inequality \eqref{mr.Rhqineq} for $r = \infty$ follows from the following estimate:
\begin{equation*}
  \begin{gathered}
    \bigl| p^c (R^h_q f)(p) \bigr| \leq \int_0^\infty \bigl| \bigl(\tfrac{p}{t}\bigr)^c (R_q f)\bigl(\tfrac p t\bigr) \bigr| t^{\alpha-1} |h(t)| \, dt \\
    \leq \|R_q f\|_{\infty,c} \|h\|_{1,\alpha} \overset{\eqref{aux.Rqestinf}}{\leq} \Gamma(\alpha)^{-1} \|e^{-q}\|_{1,c} \|h\|_{1,\alpha} \|f\|_{\infty,I-c}.
  \end{gathered}
\end{equation*}

\section{Wiener approximation theorems and their analogs}\label{sec.aux}
In the present section we prove two auxilary results that will be used in the proof of Theorem \ref{thm.injPiq}. These results are based on the following two Wiener's approximation theorems. Let $\mathcal F$ and $\mathcal F^{-1}$ denote the Fourier transform and the inverse Fourier transform, respectively:
\begin{align}
    \mathcal Ff(\xi) & = (2\pi)^{-\frac n 2} \int_{\mathbb R^n} e^{-i\xi x} f(x) \, dx, \quad \xi \in \mathbb R^n, \label{aux.defF}\\
    \mathcal F^{-1}f(\xi) & = (2\pi)^{-\frac n 2} \int_{\mathbb R^n} e^{i\xi x} f(x) \, dx, \quad \xi \in \mathbb R^n. \label{aux.defFi}
\end{align}
For a given function $f$ on $\mathbb R^n$ denote by $\mathcal S_f$ the linear span if its additive shifts:
\begin{equation}\label{aux.defS}
  \mathcal S_f = \lspan \bigl\{ f_a \mid f_a(x) = f(x-a), \; a \in \mathbb R^n \bigr\}.
\end{equation}

\begin{theorem}\label{aux.WienL2} Let $f \in L^2(\mathbb R^n)$. Then $\mathcal S_f$ is dense in $L^2(\mathbb R^n)$ iff $\mathcal F f \neq 0$ a.e.
\end{theorem}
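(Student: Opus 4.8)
The plan is to transport the problem to the Fourier side, where additive shifts become modulations. Since $\mathcal F$ is a unitary operator on $L^2(\mathbb R^n)$ and $\mathcal F f_a(\xi) = e^{-ia\xi}\mathcal F f(\xi)$, the subspace $\mathcal S_f$ is dense in $L^2(\mathbb R^n)$ if and only if its image $\mathcal F\mathcal S_f = \lspan\{e^{-ia\xi}g(\xi) \mid a \in \mathbb R^n\}$ is dense, where $g = \mathcal F f$. By the standard Hilbert-space duality, this span is dense if and only if the only $\psi \in L^2(\mathbb R^n)$ satisfying $\int_{\mathbb R^n} e^{-ia\xi} g(\xi)\overline{\psi(\xi)}\,d\xi = 0$ for all $a \in \mathbb R^n$ is $\psi = 0$. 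Thus everything reduces to understanding when the relation $\mathcal F(g\bar\psi)\equiv 0$ forces $\psi = 0$.

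For the "if" direction, assume $g\neq 0$ a.e.\ and let $\psi$ be as above. Since $g,\psi\in L^2(\mathbb R^n)$, the product $g\bar\psi$ lies in $L^1(\mathbb R^n)$, and the hypothesis says precisely that its Fourier transform vanishes identically; by the uniqueness theorem for the Fourier transform on $L^1$ we conclude $g\bar\psi = 0$ a.e. Because $g \neq 0$ almost everywhere, this forces $\psi = 0$ a.e., so the span is dense and hence so is $\mathcal S_f$.

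For the "only if" direction I argue by contraposition. Suppose $\mathcal F f = g$ vanishes on a set of positive measure; intersecting with a large ball we may choose $E$ with $0 < |E| < \infty$ on which $g \equiv 0$. Then $\psi = \chi_E \in L^2(\mathbb R^n)$ is nonzero, yet $\int e^{-ia\xi}g(\xi)\overline{\chi_E(\xi)}\,d\xi = \int_E e^{-ia\xi}g(\xi)\,d\xi = 0$ for every $a$, so $\chi_E$ is orthogonal to $\mathcal F\mathcal S_f$; hence $\mathcal F\mathcal S_f$, and therefore $\mathcal S_f$, fails to be dense.

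The routine points to verify are that $\mathcal F$ intertwines translation with modulation, the elementary Hilbert-space fact that a subset is total iff its orthogonal complement is trivial, and the $L^1$ uniqueness theorem for the Fourier transform. The only place requiring genuine care is the "if" direction: although $g$ and $\psi$ are merely $L^2$, their product is integrable by Cauchy--Schwarz, so the $L^1$ uniqueness theorem does apply — this is the step I would regard as the crux of the argument.
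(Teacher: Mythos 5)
Your proof is correct and is precisely the classical Plancherel-based argument that the paper invokes by citing Wiener: translate to the Fourier side where shifts become modulations, reduce density to triviality of the orthogonal complement, and use Cauchy--Schwarz plus $L^1$ uniqueness of the Fourier transform for the ``if'' direction and an indicator of a positive-measure zero set for the ``only if'' direction. Nothing further is needed.
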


\begin{theorem}\label{aux.WienL1} Let $f \in L^1(\mathbb R^n)$. Then $\mathcal S_f$ is dense in $L^1(\mathbb R^n)$ iff $\mathcal F f$ does not vanish. 
\end{theorem}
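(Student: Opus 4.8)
This is the $\mathbb R^n$ version of Wiener's Tauberian theorem, so the plan is to reproduce its classical proof, exploiting that $L^1(\mathbb R^n)$ is a commutative Banach algebra under convolution; throughout I normalize the Fourier transform so that $\mathcal F(u*v)=\mathcal F u\cdot\mathcal F v$, which changes \eqref{aux.defF} only by the harmless factor $(2\pi)^{n/2}$ and not the zero set. The necessity direction is immediate in contrapositive form: if $\mathcal F f(\xi_0)=0$ for some $\xi_0\in\mathbb R^n$, then $\Lambda(g)=\int_{\mathbb R^n}e^{-i\xi_0\cdot x}g(x)\,dx$ is a nonzero bounded functional on $L^1(\mathbb R^n)$ annihilating every translate $f_a$, since $\Lambda(f_a)=e^{-i\xi_0\cdot a}\Lambda(f)=0$; hence $\mathcal S_f$ is not dense.

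For sufficiency, assume $\mathcal F f$ is nowhere zero and put $J=\overline{\mathcal S_f}$. The first step is to upgrade ``translation-invariant closed subspace'' to ``closed ideal'': for $g\in L^1(\mathbb R^n)$ and $\varphi\in J$, the vector-valued integral $g*\varphi=\int_{\mathbb R^n}g(a)\,\varphi_a\,da$ is an $L^1$-limit of finite linear combinations of translates of $\varphi$, all of which lie in the closed translation-invariant set $J$; thus $g*\varphi\in J$. Since a closed subspace containing a dense subspace is the whole space, it now suffices to produce a dense subspace of $L^1(\mathbb R^n)$ inside $J$, and the ideal property together with $f\in J$ is what makes this possible.

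The crux is a local inversion lemma: \emph{for every $\xi_0\in\mathbb R^n$ there exists $E_{\xi_0}\in J$ with $\mathcal F E_{\xi_0}\equiv 1$ on a neighborhood of $\xi_0$}. To construct it, set $c=\mathcal F f(\xi_0)\neq 0$ and take $\rho(x)=e^{i\xi_0\cdot x}\delta^n\rho_0(\delta x)$, where $\rho_0$ is a fixed Schwartz function whose Fourier transform equals $1$ near the origin and is compactly supported, so that $\mathcal F\rho\equiv 1$ on a ball of radius $\sim\delta$ around $\xi_0$; by continuity of translation in $L^1(\mathbb R^n)$, taking $\delta$ small makes $b:=\rho*f-c\,\rho$ satisfy $\|b\|_1<|c|$. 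Then $w:=-c^{-1}b=\rho-c^{-1}\rho*f$ has $\|w\|_1<1$, the Neumann series $\sum_{k\geq 0}w^{*k}$ converges in $L^1(\mathbb R^n)$, and
\[
  E_{\xi_0}:=c^{-1}(\rho*f)*\sum_{k\geq 0}w^{*k}\in J,
\]
because every summand lies in the closed ideal $J$. Its Fourier transform equals $c^{-1}\,\mathcal F\rho\cdot\mathcal F f\cdot(1-\mathcal F w)^{-1}$, and on the ball where $\mathcal F\rho\equiv 1$ one has $1-\mathcal F w=c^{-1}\mathcal F f$, so this collapses to the constant $1$ there.

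Finally, it is enough to show $g\in J$ for every $g\in L^1(\mathbb R^n)$ whose Fourier transform has compact support $K$ — and such $g$ are dense (convolve an arbitrary function with a mollifier whose Fourier transform is a compactly supported bump). Cover $K$ by finitely many balls $U_{\xi_1},\dots,U_{\xi_m}$ with $\mathcal F E_{\xi_j}\equiv 1$ on $U_{\xi_j}$, set $h_0=g$ and $h_j=h_{j-1}-h_{j-1}*E_{\xi_j}$ for $j=1,\dots,m$. Each increment $h_{j-1}-h_j=h_{j-1}*E_{\xi_j}$ lies in $J$, whereas $\mathcal F h_m=\mathcal F g\cdot\prod_{j=1}^m(1-\mathcal F E_{\xi_j})$ vanishes identically (on $U_{\xi_1}\cup\dots\cup U_{\xi_m}\supseteq K$ because of the product, and off $K$ because $\mathcal F g=0$ there), so $h_m=0$ by injectivity of $\mathcal F$ on $L^1(\mathbb R^n)$; hence $g=\sum_{j=1}^m(h_{j-1}-h_j)\in J$. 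I expect the local inversion lemma to be the main obstacle: the tempting shortcut of dividing $\mathcal F g$ by the nowhere-vanishing $\mathcal F f$ fails, since $\mathcal F f$ is merely continuous and $1/\mathcal F f$ need not be a Fourier transform of an $L^1$ function; the Neumann-series construction, driven by $L^1$-continuity of translation, is precisely what bypasses this, and everything else is routine bookkeeping with the convolution structure.
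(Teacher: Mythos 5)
Your proof is correct, and it takes a genuinely different route from the one in the paper. Both arguments share the same skeleton — reduce to functions with compactly supported Fourier transform and show each such function lies in the closed ideal $J=\overline{\mathcal S_f}$ — but they diverge at the crux. The paper passes to the quotient Banach algebra $A=L^1(\mathbb R^n,\mathbb C)/I$, where $I$ consists of functions whose transform vanishes on a compact neighbourhood of $\supp \mathcal F h$; this $A$ is unital, its multiplicative functionals are identified with point evaluations on that neighbourhood, and Gelfand theory then yields invertibility of $f+I$ and hence a global convolution inverse of $f$ modulo $I$. You instead build \emph{local} units $E_{\xi_0}\in J$ with $\mathcal F E_{\xi_0}\equiv 1$ near each $\xi_0$, via the modulated mollifier $\rho$, the translation-continuity estimate $\|\rho*f-c\rho\|_1<|c|$, and a Neumann series, and then exhaust a compact frequency support by a finite telescoping. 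What the paper's route buys is brevity once the Gelfand machinery (in particular the non-trivial identification of the characters of $A$, which it leaves as ``one can show'') is granted; what your route buys is self-containedness — the only analytic input is continuity of translation in $L^1$ — at the price of the explicit local-inversion computation. Your handling of the $k=0$ term of $\sum_k w^{*k}$ (a formal identity that disappears after convolving with $\rho*f$) is fine as written. One point worth a sentence in either proof: the paper's $L^1(\mathbb R^n)$ is real-valued while both arguments produce complex-valued approximants, so one should finish by taking real parts, noting that for real $f$ and real $h$ one has $\bigl\| h-\sum_j \Re(c_j) f_{a_j}\bigr\|_1\le \bigl\|h-\sum_j c_j f_{a_j}\bigr\\|_1$; this is cosmetic and affects both proofs equally.
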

The proofs of these theorems for $n=1$ can be found in \cite{Wien1933}. These proofs of Theorem \ref{aux.WienL2} and of the ``only if'' part of Theorem \ref{aux.WienL1} also work for $n \geq 2$. On the other hand, we don't have a reference for a proof of the ``if'' part of Theorem \ref{aux.WienL1} for $n \geq 2$. For the reader's convenience, we sketch a proof below.

\begin{proof}[Sketch of the proof of the ``if'' part of Theorem \ref{aux.WienL1}]
We need to show that any $h \in L^1(\mathbb R^n)$ can be approximated in $L^1(\mathbb R^n)$ by the shifts of $f$. Without loss of generality, we suppose that $\mathcal F h$ has compact support. It follows from the Hahn--Banach theorem that it is sufficient to show that for any $K \in L^\infty(\mathbb R^n)$ the equality $f*K = 0$ implies $h * K = 0$, where $*$ stands for convolution. Hence, it is sufficient to show that there exists $g \in L^1(\mathbb R^n)$ such that $f*g = h$.

We use the theory of commutative Banach algebras, see, e.g., \cite{Kain2009} for definitions. Let $L^1(\cdot,\mathbb C)$ be the space of complex-valued Lebesgue integrable functions.

Let $\Omega \subset \mathbb R^n$ be an open bounded set with closure $\overline \Omega$ and containing $\supp \mathcal F h$. Note that $L^1(\mathbb R^n,\mathbb C)$ is a commutative Banach algebra with repsect to convolution. Then $I = \bigl\{ g \in L^1(\mathbb R^n,\mathbb C) \mid \mathcal F g \equiv 0$ on $\overline \Omega \bigr\}$ is a closed ideal in $L^1(\mathbb R^n,\mathbb C)$. Put $A = L^1(\mathbb R^n,\mathbb C) / I$. Note that $A$ is a commutative Banach algebra with unit $e + I$, where $e \in L^1(\mathbb R^n)$ is an arbitrary function such that $\mathcal F e \equiv 1$ on $\overline \Omega$ and $e+I$ denotes the coset of $e$ in $A$.

One can show that the only non-zero complex multiplicative linear functionals on $A$ are of the form $\varphi_\xi \colon a+I \mapsto a(\xi)$, where $\xi \in \overline \Omega$, $a \in L^1(\mathbb R^n,\mathbb C)$. Using \cite[Theorem 1.2.9]{Kain2009} and taking into account that $\varphi_\xi(f+I) \neq 0$ for any $\xi \in \overline \Omega$, we obtain that $f+I$ is invertible in $A$. It means that there exists $g_0 \in L^1(\mathbb R^n,\mathbb C)$ such that $\mathcal F f \cdot \mathcal Fg_0 \equiv 1$ on $\overline \Omega$. Put $g = h* \Re g_0$. Then $f * g = h$.
\end{proof}

The main results of the present section are given in the following lemmas. For a given function $k$ on $\mathbb R^n_+$ we denote by $\mathcal T_k$ the linear span of its multiplicative shifts:
\begin{equation}\label{aux.defT}
  \mathcal T_k = \lspan \bigl\{ k_p \mid k_p(x) = k(p_1 x_1,\ldots, p_n x_n), \; p \in \mathbb R^n_+\bigr\}.
\end{equation}

\begin{lemma}\label{lemma.L2} Let $k \in L^2_c(\mathbb R^n_+)$. Then the following statements are equivalent:
\begin{enumerate}
 \item[(1)] $\mathcal T_k$ is dense in $L^2_c(\mathbb R^n_+)$.
 \item[(2)] $(Mk)(z) \neq 0$ a.e. for $\Re z = c$.
 \item[(3)] The equation 
\begin{equation*}
    \int_{\mathbb R^n_+} k_p(x) f(x) \, dx = 0, \quad p \in \mathbb R^n_+,
\end{equation*}
    has only the trivial solution $f = 0$ in $L^2_{I-c}(\mathbb R^n_+)$.
\end{enumerate}
\end{lemma}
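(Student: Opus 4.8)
The plan is to prove the chain of implications $(1)\Rightarrow(2)\Rightarrow(3)\Rightarrow(1)$, using the multiplicative-to-additive change of variables $x_k = e^{t_k}$ to transfer everything to $\mathbb R^n$, where Wiener's $L^2$ theorem (Theorem \ref{aux.WienL2}) applies. Concretely, I would define the isometry $U \colon L^2_c(\mathbb R^n_+) \to L^2(\mathbb R^n)$ by $(Ug)(t) = e^{c\cdot t} g(e^{t_1},\ldots,e^{t_n})$ (with the obvious abuse of notation $e^t = (e^{t_1},\ldots,e^{t_n})$), so that $\|g\|_{2,c} = \|Ug\|_{L^2(\mathbb R^n)}$. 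Under $U$, the multiplicative shift $k_p$ with $p = e^a$ becomes, up to the fixed scalar $e^{-c\cdot a}$, the additive shift $(Uk)_a$ of $Uk$; hence $U$ carries $\mathcal T_k$ onto $\mathcal S_{Uk}$, and the Mellin transform $(Mk)(c+i\eta)$ becomes, up to a $2\pi$-power, the Fourier transform $(\mathcal F(Uk))(\eta)$ (with a sign convention matching \eqref{aux.defF}). This dictionary immediately turns statement (1) into ``$\mathcal S_{Uk}$ dense in $L^2(\mathbb R^n)$'' and statement (2) into ``$\mathcal F(Uk) \neq 0$ a.e.'', so $(1)\Leftrightarrow(2)$ is exactly Theorem \ref{aux.WienL2}.

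For $(2)\Rightarrow(3)$: if $f \in L^2_{I-c}(\mathbb R^n_+)$ satisfies $\int k_p f\,dx = 0$ for all $p$, then the functional $g \mapsto \int g f\,dx$ (continuous on $L^2_c$ since $L^2_{I-c} = (L^2_c)^*$ by the duality noted after \eqref{mr.defM}) annihilates all of $\mathcal T_k$, hence by density annihilates $L^2_c(\mathbb R^n_+)$, forcing $f = 0$. For $(3)\Rightarrow(2)$, I argue the contrapositive: if the zero set of $(Mk)(z)$ on $\Re z = c$ has positive measure, I want to produce a nonzero $f \in L^2_{I-c}(\mathbb R^n_+)$ with $\int k_p f\,dx = 0$ for all $p$. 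Again transferring via $U$ to $\mathbb R^n$: the relation $\int_{\mathbb R^n_+} k_p(x) f(x)\,dx = 0$ becomes $(Uk)_a * (VUf)^{\vee} $-type vanishing, i.e. the convolution of $Uk$ with a suitable reflected/weighted version of $f$ vanishes identically; taking $\mathcal F(Uf)$ to be (say) the $L^2$ function supported on the positive-measure zero set of $\mathcal F(Uk)$ gives $\mathcal F(Uk)\cdot\mathcal F(Uf) \equiv 0$, hence the convolution vanishes, hence $\int k_p f\,dx = 0$ for all $p$, and $f \neq 0$ since its Fourier transform is nonzero. One must be slightly careful that the weight $e^{(c-I)\cdot t}$ (from $L^2_{I-c}$ versus $L^2_c$) and the reflection $t \mapsto -t$ are bookkept so that the pairing $\int k_p f\,dx$ genuinely equals a convolution of two $L^2(\mathbb R^n)$ functions evaluated at the point $-a$; this is where the exponent $I - c$ in statement (3), paired with $c$ in statements (1)--(2), does its work.

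The main obstacle I anticipate is precisely this bookkeeping of the three weights at once --- the measure $x^{2c-I}\,dx$ defining $L^2_c$, the dual weight $x^{2(I-c)-I}\,dx$ for $f$, and the Jacobian $\prod dx_k = \prod e^{t_k}\,dt_k$ of the log-substitution --- so that the verbal claims ``$U$ is an isometry'', ``$U$ intertwines multiplicative shifts with additive shifts'', and ``$(Mk)(c+i\eta)$ is the Fourier transform of $Uk$'' all hold with consistent constants, and so that the bilinear pairing in (3) is manifestly a convolution. I would set up one display fixing all conventions and then verify each of these three facts as short computations, after which $(1)\Leftrightarrow(2)$ is a citation of Theorem \ref{aux.WienL2}, $(1)\Rightarrow(3)$ is Hahn--Banach/duality, and $(3)\Rightarrow(2)$ is the Fourier-support construction above. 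A minor secondary point to handle is measurability/integrability: one should note that for $k \in L^2_c$ and $f \in L^2_{I-c}$ the integral $\int k_p f\,dx$ converges absolutely for every $p$ (Cauchy--Schwarz with the matched weights), so statement (3) is well-posed.
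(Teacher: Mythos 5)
Your proposal is correct and follows essentially the same route as the paper: the logarithmic change of variables $E_c$ turning multiplicative shifts into additive ones and the Mellin transform into a Fourier transform, Wiener's $L^2$ theorem for $(1)\Leftrightarrow(2)$, and the duality $(L^2_c)^*=L^2_{I-c}$ to connect with $(3)$. The only (harmless) difference is that the paper gets $(1)\Leftrightarrow(3)$ in one step from the Hahn--Banach theorem, whereas you obtain the direction $(3)\Rightarrow(2)$ by the explicit construction of an $f$ whose Fourier transform (after transfer to $\mathbb R^n$) is supported on the positive-measure zero set of $\mathcal F(E_ck)$ --- which is correct but re-derives what duality plus Theorem \ref{aux.WienL2} already give.
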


\begin{proof} (1 $\Longleftrightarrow$ 2). For a given function $f$ on $\mathbb R^n_+$ and $c \in \mathbb R^n$ we define the function $E_c f$ on $\mathbb R^n$ as follows:
\begin{equation}
  (E_c f)(y) = e^{c \cdot y} f(e^y), \quad y = (y_1,\ldots,y_n) \in \mathbb R^n, \label{aux.defE}
\end{equation}
where $e^y = \exp(y) =  (e^{y_1},\ldots,e^{y_n})$. One can see that
\begin{equation}\label{aux.Eckp}
  (E_c k_{\exp(a)})(y) = e^{-c \cdot a}(E_c k)(y+a), 
\end{equation}
where $a = (a_1,\ldots,a_n) \in \mathbb R^n$ and $k_{\exp(a)}(x) = k(e^{a_1}x_1,\ldots,e^{a_n}x_n)$. It follows from \eqref{aux.Eckp} that
\begin{equation}\label{aux.EcTc}
  E_c \mathcal T_k = \mathcal S_{E_c k},
\end{equation}
where the sets $\mathcal S_{E_c k}$ and $E_c \mathcal T_k$ are defined in \eqref{aux.defS} and \eqref{aux.defT}, respectively.

One can see that
\begin{equation}
  \text{$E_c$ is an isometry from $L^r_c(\mathbb R^n_+)$ to $L^r(\mathbb R^n)$ for any $1 \leq r \leq \infty$}. \label{aux.Eciso}
\end{equation}
It follows from \eqref{aux.EcTc} and \eqref{aux.Eciso} that $\mathcal T_k$ is dense in $L^2_c(\mathbb R^n_+)$ if and only if $\mathcal S_{E_c k}$ is dense in $L^2(\mathbb R^n)$. According to Theorem \ref{aux.WienL2}, this is equivalent to
\begin{equation}\label{aux.FiEcknz}
  (\mathcal F^{-1} E_c k)(\xi) \neq 0 \quad \text{for a.e. $\xi \in \mathbb R^n$}.
\end{equation}

Now, using definition \eqref{mr.defM}, we obtain the equality
\begin{equation}\label{aux.MviaF}
  T_c M = (2\pi)^{\frac n 2}\mathcal F^{-1} E_c,
\end{equation}
where $T_c$ maps a function $\varphi$ on $c+i\mathbb R^n$ to the function $T_c \varphi$ on $\mathbb R^n$ defined as
\begin{equation}\label{aux.defTc}
    (T_c \varphi)(\xi) = \varphi(c+i\xi), \quad \xi \in \mathbb R^n.
\end{equation}
It follows from  \eqref{aux.MviaF} that \eqref{aux.FiEcknz} is equivalent to $(Mk)(z) \neq 0$ a.e. for $\Re z = c$.

(1 $\Longleftrightarrow$ 3). This is a consequence of the Hahn--Banach theorem.
\end{proof}

\begin{lemma}\label{lemma.Linf}
    Let $k \in L^1_c(\mathbb R^n_+)$. Then the following statements are equivalent:
\begin{enumerate}
    \item[(1)] $\mathcal T_k$ is dense in $L^1_c(\mathbb R^n_+)$.
    \item[(2)] $(M k)(z) \neq 0$ for $\Re z = c$.
    \item[(3)] The equation
    \begin{equation}\label{aux.Linfeq}
        \int_{\mathbb R^n_+} k_p(x) f(x) \, dx = 0, \quad p \in \mathbb R^n_+
    \end{equation}
    has only the trivial solution $f = 0$ in the class $L^\infty_{I-c}(\mathbb R^n_+)$.
\end{enumerate}
\end{lemma}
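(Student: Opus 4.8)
The plan is to repeat, almost verbatim, the argument used for Lemma~\ref{lemma.L2}, with the $L^2$ Wiener theorem (Theorem~\ref{aux.WienL2}) replaced by its $L^1$ counterpart (Theorem~\ref{aux.WienL1}); the only change in the conclusion is that ``$\neq 0$ a.e.'' is upgraded to ``$\neq 0$'' everywhere, which reflects exactly the difference between the two Wiener theorems.

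\textit{(1) $\Longleftrightarrow$ (2).} By \eqref{aux.Eciso} the map $E_c$ of \eqref{aux.defE} is an isometry of $L^1_c(\mathbb R^n_+)$ onto $L^1(\mathbb R^n)$, and by \eqref{aux.EcTc} it carries $\mathcal T_k$ onto $\mathcal S_{E_c k}$. Since $k \in L^1_c(\mathbb R^n_+)$ we have $E_c k \in L^1(\mathbb R^n)$, so Theorem~\ref{aux.WienL1} applies: $\mathcal T_k$ is dense in $L^1_c(\mathbb R^n_+)$ iff $\mathcal S_{E_c k}$ is dense in $L^1(\mathbb R^n)$ iff $\mathcal F(E_c k)$ has no zeros on $\mathbb R^n$. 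By \eqref{aux.MviaF} one has $(T_c Mk)(\xi) = (Mk)(c+i\xi) = (2\pi)^{n/2}(\mathcal F^{-1}E_c k)(\xi)$, and since $(\mathcal F^{-1} g)(\xi) = (\mathcal F g)(-\xi)$, non-vanishing of $\mathcal F(E_c k)$ on $\mathbb R^n$ is equivalent to non-vanishing of $\mathcal F^{-1}(E_c k)$ on $\mathbb R^n$, hence to $(Mk)(z) \neq 0$ for every $z$ with $\Re z = c$. This proves (1) $\Longleftrightarrow$ (2).

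\textit{(1) $\Longleftrightarrow$ (3).} This is the Hahn--Banach theorem together with the duality $(L^1_c(\mathbb R^n_+))^* = L^\infty_{I-c}(\mathbb R^n_+)$ recalled in Section~\ref{sec.mr}, realised by the pairing $\langle g,f\rangle = \int_{\mathbb R^n_+} g(x) f(x)\,dx$, which is finite for $g \in L^1_c(\mathbb R^n_+)$ and $f \in L^\infty_{I-c}(\mathbb R^n_+)$ by H\"older's inequality (as $|g f| = (|g|x^{c-I})(|f|x^{I-c})$). A continuous functional annihilating $\mathcal T_k$ is precisely an $f \in L^\infty_{I-c}(\mathbb R^n_+)$ with $\int_{\mathbb R^n_+} k_p(x) f(x)\,dx = 0$ for all $p \in \mathbb R^n_+$; hence $\mathcal T_k$ is dense in $L^1_c(\mathbb R^n_+)$ iff the only such $f$ is $f = 0$, which is statement (3).

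I do not anticipate a genuine obstacle: the lemma is a transcription of the proof of Lemma~\ref{lemma.L2}, and all the analytic inputs (the isometry $E_c$, the intertwining relations \eqref{aux.EcTc} and \eqref{aux.MviaF}, the duality $(L^1_c)^*=L^\infty_{I-c}$) are already in place. The only point requiring a little care is the bookkeeping between $\mathcal F$ and $\mathcal F^{-1}$ --- Theorem~\ref{aux.WienL1} is phrased via $\mathcal F$, whereas \eqref{aux.MviaF} is phrased via $\mathcal F^{-1}$ --- but this is harmless, since for $g \in L^1(\mathbb R^n)$ the functions $\mathcal F g$ and $\mathcal F^{-1} g$ have the same zero set up to the reflection $\xi \mapsto -\xi$.
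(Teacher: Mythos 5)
Your proof is correct and follows essentially the same route as the paper's: both reduce the lemma to Wiener's $L^1$ approximation theorem via the isometry $E_c$, the intertwining relation \eqref{aux.MviaF}, and the Hahn--Banach duality $(L^1_c)^* = L^\infty_{I-c}$. The only difference is organizational --- the paper proves (2)~$\Longleftrightarrow$~(3) directly and then (1)~$\Longleftrightarrow$~(3) by duality, while you prove (1)~$\Longleftrightarrow$~(2) and (1)~$\Longleftrightarrow$~(3) --- which traverses the same chain of equivalences in a different order.
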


\begin{proof} (2 $\Longleftrightarrow$ 3). Let $E_c$ be defined by formula \eqref{aux.defE}. One can see that
\begin{equation}\label{aux.EcPars}
  \begin{gathered}
  \int_{\mathbb R^n_+} u(x)v(x) \, dx = \int_{\mathbb R^n} (E_c u)(y) \, (E_{I-c} v)(y) \, dy,\\
   \text{for any $u \in L^r_c(\mathbb R^n_+)$, $v \in L^p_{I-c}(\mathbb R^n_+)$, $r \geq 1$, $p \geq 1$, $\tfrac 1 r + \tfrac 1 p = 1$.}
  \end{gathered}
\end{equation}
It follows from formulas \eqref{aux.Eckp}, \eqref{aux.Eciso} and \eqref{aux.EcPars} that equation \eqref{aux.Linfeq} has only the trivial solution $f = 0$ in $L^\infty_{I-c}(\mathbb R^n_+)$ if and only if the equation
\begin{equation}
  \int_{\mathbb R^n} (E_c k)(y-a) \Phi(y) \, dy = 0, \quad a \in \mathbb R^n,
\end{equation}
has only the trivial solution $\Phi = 0$ in $L^\infty(\mathbb R^n)$. By the Hahn--Banach theorem it is equivalent to density of $\mathcal S_{E_c k}$ in $L^1(\mathbb R^n)$. By Theorem \ref{aux.WienL1} and formula \eqref{aux.MviaF} it is equivalent to $(Mk)(z) \neq 0$ for $\Re z = c$.
 
(1 $\Longleftrightarrow$ 3). This is a consequence of the Hahn--Banach theorem.
\end{proof}

\section{Proof of Theorem \ref{thm.injPiq}}\label{sec.tPiq}
We are going to prove Theorem \ref{thm.injPiq} for operator $R_q$. The statement concerning operator $\Pi_q$ is a corollary of Theorem \ref{thm.injRhq} and formula \eqref{aux.MhPi}. For brevity, we denote
\begin{equation*}
  H^n_c = \bigl\{ z \in \mathbb C^n \colon \Re z = c \bigr\}, \quad c \in \mathbb R^n.
\end{equation*}
For a given function $\varphi$ defined on $H^n_c$ we set
\begin{equation*}
  Z_c(\varphi) = \bigl\{ z \in H^n_c \colon \varphi(z) = 0 \bigr\}.
\end{equation*}

($r = 1$). ($\implies$). Suppose that, on the contrary, there exists a bounded relatively open subset $U \subset H^n_c$, $U \neq \varnothing$, such that $(Me^{-q})(z) = 0$ for all $z \in U$.

Let $\chi \in C^\infty(H^n_{I-c})$ be a non-zero function such that $\chi(I-z) = 0$ for $z \not\in U$. Define the function $\widehat \chi$ on $\mathbb R^n_+$ by the following formula:
\begin{equation}\label{tpr.chidef}
  \widehat \chi = E^{-1}_{(I-c)} \mathcal F T_{(I-c)} \chi,
\end{equation}
where the operators $E_{(I-c)}$ and $T_{(I-c)}$ are defined in formulas \eqref{aux.defE}, \eqref{aux.defTc} and $\mathcal F$ is the Fourier transform defined in formula \eqref{aux.defF}.

Using that $\mathcal F C^\infty_c(\mathbb R^n) \subset L^1(\mathbb R^n)$ and taking into account \eqref{aux.Eciso}, we obtain
\begin{equation}
  \widehat \chi \in L^1_{I-c}(\mathbb R^n_+) \;\; \text{and} \;\; \widehat \chi \neq 0.
\end{equation}
It follows from formulas \eqref{aux.MviaF} and \eqref{tpr.chidef} that 
\begin{equation}\label{tpr.Mchi}
  (M \widehat \chi)(I-z) = (2\pi)^{\frac n 2}\chi(I-z), \quad  z \in H^n_c.
\end{equation}
Using formula \eqref{mr.MfMRq} with $f = \widehat \chi$ and taking into account that $\chi(I-z) = 0$ for $z \not\in U$ and $(Me^{-q})(z) = 0$ for $z \in U$, we obtain that $(MR_q \widehat \chi)(z) = 0$ for $z \in H^n_c$. It follows that $R_q \widehat \chi = 0$ which contradicts the injectivity of $R_q$ on $L^1_{I-c}(\mathbb R^n_+)$.

($r = 1$). ($\Longleftarrow$). Suppose that zeros of function $Me^{-q}$ are nowhere dense in the plane $H^n_c$. We are going to show that $R_q$ is injective in $L^1_{I-c}(\mathbb R^n_+)$ by contradiction. 

Suppose that there exists $f \in L^1_{I-c}(\mathbb R^n_+)$ such that $f \neq 0$, $R_q f = 0$. The following statements hold true:
\begin{gather}
  \text{$H^n_{I-c} \setminus Z_{I-c}(Mf)$ is open in $H^n_{I-c}$ and not empty,} \label{tpr.Zcf}\\
  \text{$H^n_c \setminus Z_c\bigl(Me^{-q}\bigr)$ is open and dense in $H^n_c$.} \label{tpr.ZcMhMeq}
\end{gather}

It follows from \eqref{tpr.Zcf} and \eqref{tpr.ZcMhMeq} that there exists a relatively open set $U \subset H^n_c$, $U \neq \varnothing$, such that
\begin{equation}\label{tpr.MfMhMeqnz}
  (Mf)(I-z) \, (Me^{-q})(z) \neq 0, \quad z \in U.
\end{equation}
Formulas \eqref{mr.MfMRq} and \eqref{tpr.MfMhMeqnz} imply that $(MR_q f)(z) \neq 0$ for $z \in U$. It contradicts the assumption that $R_q f = 0$.

($r=2$). ($\implies$). Suppose, on the contrary, that there exists a bounded set $U \subset H^n_c$ of positive Lebesgue measure on $H^n_c$, such that $(Me^{-q})(z) = 0$ for all $z \in U$. Define function $\chi$ on $H^n_{I-c}$ by the formula
\begin{equation*}
    \chi(I-z) = \begin{cases}
              1, & z \in U, \\
              0, & z \not\in U. 
            \end{cases}
\end{equation*}
Next, define function $\widehat \chi$ on $\mathbb R^n_+$ using formula \eqref{tpr.chidef}. It follows from formulas \eqref{aux.Eciso}, \eqref{tpr.chidef} and from the inclusion $\mathcal FL^2(\mathbb R^n) \subset L^2(\mathbb R^n)$ that
\begin{equation*}
  \widehat \chi \in L^2_{I-c}(\mathbb R^n_+) \;\; \text{and} \;\; \widehat \chi \neq 0.
\end{equation*}
It also follows from \eqref{aux.MviaF} and \eqref{tpr.chidef} that formula \eqref{tpr.Mchi} holds.

Using formula \eqref{mr.MfMRq} with $f = \widehat \chi$ and taking into account that $(M\widehat \chi)(I-z) \cdot (Me^{-q})(z) = 0$ for all $z \in H^n_c$, we obtain that $(MR_q \widehat \chi)(z) = 0$ for $z \in H^n_c$. Hence, $R_q \widehat \chi = 0$, and this is a contradiction to the assumption that $R_q$ is injective in $L^2_{I-c}(\mathbb R^n_+)$.

($r=2$.) ($\Longleftarrow$). Suppose that $Me^{-q}$ does not vanish on the plane $H^n_c$ a.e. Let $f \in L^2_{I-c}(\mathbb R^n_+)$ be a function such that $R_q f = 0$.

Note that $x^{I-2c}f(x) \in L^2_c(\mathbb R^n_+)$. Using Lemma~\ref{lemma.L2} we find $a_k \in \mathbb R$, $p_k \in \mathbb R^n_+$ such that
\begin{equation}\label{tpr.fL2repr}
  x^{I-2c} f(x) = \sum\nolimits_{k=1}^\infty a_k \exp\bigl(-q_{p_k}(x)\bigr) \quad \text{in $L^2_c(\mathbb R^n_+)$.}
\end{equation}
Using formula \eqref{tpr.fL2repr} and the coarea formula \eqref{aux.coarea}, we obtain the following equality showing that $f = 0$:
\begin{equation}
  \|f\|^2_{2,I-c} = \int_{\mathbb R^n_+} x^{I-2c}f^2(x) \, dx = \sum_{k=1}^\infty a_k \int_0^\infty t^{-1} e^{-t} (R_q f)\bigl(\tfrac{p_k}{t}\bigr) \, dt = 0.
\end{equation}

($r = \infty$). ($\implies$). Assume, on the contrary, that there exists $z^0 = (z^0_1,\ldots,z^0_n) \in H^n_c$ such that $(Me^{-q})(z^0) = 0$. Set $\widehat \chi(x) = x^{z^0-I}$.

Note that $\widehat \chi \in L^\infty_{I-c}(\mathbb R^n_+)$ and that
\begin{equation}
  (R_q \widehat \chi)(p) = p^{-z^0} (R_q \widehat \chi)(I) \overset{\eqref{aux.MqtoRq}}{=\joinrel=} p^{-z^0} \frac{(Me^{-q})(z^0)}{\Gamma(z^0_1+\cdots+z^0_n)} = 0, \quad p \in \mathbb R^n_+.
\end{equation}
This equality contradicts the injectivity of $R_q$ on $L^\infty_{I-c}(\mathbb R^n_+)$.

($r=\infty$). ($\Longleftarrow$). Let $f \in L^\infty_{I-c}(\mathbb R^n_+)$ be a function such that $R_q f = 0$. Note that $x^{2(I-c)}e^{-|x|}f(x) \in L^1_c(\mathbb R^n_+)$. Using Lemma \ref{lemma.Linf} we find $a_k \in \mathbb R$, $p_k \in \mathbb R^n_+$ such that
\begin{equation*}
    x^{2(I-c)}e^{-|x|}f(x) = \sum\nolimits_{k=1}^\infty a_k \exp\bigl(-q_{p_k}(x)\bigr) \quad \text{in $L^1_c(\mathbb R^n_+)$}.
\end{equation*}
Hence,
\begin{equation*}
    \int_{\mathbb R^n_+} x^{2(I-c)} e^{-|x|} f^2(x)\, dx = \sum_{k=1}^\infty a_k \int_0^\infty t^{-1} e^{-t} (R_q f)\left(\tfrac{p_k}{t}\right) \, dt = 0.
\end{equation*}
This equality implies that $f = 0$.

\section{Proof of Theorem \ref{thm.injRhq}}\label{sec.tRhq}
($r = 1$). ($\implies$). Suppose, on the contrary, that $R^h_q$ is injective in $L^1_{I-c}(\mathbb R^n_+)$ but either $Z_c(Me^{-q})$ has non-empty relative interior in $H^n_c$, or $Z_\alpha(Mh)$ has non-empty relative interior in $H^1_\alpha$. It implies that there exists a bounded subset $U \subset Z_c( (Me^{-q}) (Mh)')$ with non-empty relative interior in $H^n_c$, where $(Mh)'(z) = (Mh)(z_1+\cdots+z_n)$, $z = (z_1,\ldots,z_n)$.

Choose any $\chi \in C^\infty(H^n_{I-c})$ such that $\chi \neq 0$, $\chi(I-z) = 0$ for $z \not\in U$, and define $\widehat \chi \in L^1_{I-c}(\mathbb R^n_+)$ by the formula \eqref{tpr.chidef}. Repeating the proof of statement ($\implies$) of Theorem \ref{thm.injPiq} for the case of $r = 1$ (and using formula \eqref{mr.MfMRhq} instead of formula \eqref{mr.MfMRq}), one can show that $R^h_q \widehat \chi = 0$. It contradicts the injectivity of $R^h_q$ in $L^1_{I-c}(\mathbb R^n_+)$.

($r=1$). ($\Longleftarrow$). To prove the other direction, it is sufficient to show that if $R_q$ is injective in $L^1_{I-c}(\mathbb R^n_+)$ and $Z_c(Mh)$ is nowhere dense in $H^1_\alpha$, then $R^h_q$ is injective in $L^1_{I-c}(\mathbb R^n_+)$ (compare the statements of Theorems \ref{thm.injPiq} and \ref{thm.injRhq} for $r=1$). 

Suppose that $R_q$ is injective in $L^1_{I-c}(\mathbb R^n_+)$ and $Z_c(Mh)$ is nowhere dense in $H^1_\alpha$. Let $f \in L^1_{I-c}(\mathbb R^n_+)$ and assume that $R^h_q f = 0$.

Using formulas \eqref{mr.MfMRq} and \eqref{mr.MfMRhq} we obtain the identity
\begin{equation}\label{tpr.MRhqtoMRq}
  (MR^h_q f)(z) = (MR_q f)(z) \, (Mh)(s),
\end{equation}
where $z = (z_1,\ldots,z_n) \in H^n_c$, $s = z_1+\cdots+z_n$.

It follows from formulas \eqref{aux.MviaF} and \eqref{tpr.MRhqtoMRq} that $MR_q f = 0$ in $H^1_c$ as a continuous function vanishing on an open dense subset. Hence, $R_q f = 0$ and $f = 0$.

($r=2$). It follows from Lemma \ref{lemma.L2} that $R^h_q$ is injective in $L^2_{I-c}(\mathbb R^n_+)$ iff
\begin{equation}\label{tpr.Mhqnz}
    \int_{\mathbb R^n_+} x^{z-I} h(q(x)) \, dx \neq 0 \quad \text{for a.e. $z \in H^n_c$}.\\
\end{equation}
Using formula \eqref{aux.Mhqp} with $x = I$ we obtain that \eqref{tpr.Mhqnz} holds true for a.e. $z \in H_c^n$ iff $Me^{-q}$ does not vanish a.e. in $H^n_c$ and $Mh$ does not vanish a.e. in $H^1_\alpha$.

($r=\infty$). It follows from Lemma \ref{lemma.Linf} that $R^h_q$ is injective in the space $L^\infty_{I-c}(\mathbb R^n_+)$ iff the inequality of \eqref{tpr.Mhqnz} holds for all $z \in H^n_c$. Using formula \eqref{aux.Mhqp} with $x = I$, one can see that \eqref{tpr.Mhqnz} is true for all $z \in H^n_c$ iff $Me^{-q}$ does not vanish in $H^n_c$ and $Mh$ does not vanish in $H^1_\alpha$.

\section{Proof of Proposition \ref{prop.nest} and Corollary \ref{cor.nCES}}\label{sec.pn}
\begin{proof}[Proof of Proposition \ref{prop.nest}] Using the coarea formula \eqref{aux.coarea}, we obtain the following equality:
\begin{equation}\label{tpr.MetqRphi}
  \begin{gathered}
    (Me^{-\widetilde q})(z,w) \overset{\text{def}}{=} \int_{\mathbb R^k_+}\int_{\mathbb R^m_+} x^{z-I} y^{w-I} \exp(-q(x,\phi(y)) \, dx\,dy \\
  = \int_{\mathbb R^k_+}\int_0^\infty x^{z-I} t^{s-1} \exp(-q(x,t))\,dx\,dt \, (R_\phi y^{w-I})(I),
  \end{gathered}
\end{equation}
where $z \in \mathbb C^k$, $\Re z = c$, $w = (w_1,\ldots,w_m) \in \mathbb C^m$, $\Re w = d$. Using formula \eqref{aux.MqtoRq} with $h(t) = e^{-t}$, $p = I$, and equality \eqref{tpr.MetqRphi}, and taking into account that $(Me^{-t})(s) = \Gamma(s)$, we obtain
\begin{equation}\label{tpr.Metq}
  (Me^{-\widetilde q})(z,w) = \frac{(Me^{-\phi})(w) \, (Me^{-q})(z,s)}{\Gamma(s)}, \quad s = w_1+\cdots+w_m.
\end{equation}
Proposition \ref{prop.nest} is a corollary of Theorem \ref{thm.injPiq} and formula \eqref{tpr.Metq}. 
\end{proof}

\begin{proof}[Proof of Corollary \ref{cor.nCES}] We set $l(y) = y_1+\cdots+y_n$ for $y = (y_1,\ldots,y_n)$. Let $z = (z_1,\ldots,z_n) \in \mathbb C^n$, $\Re z \in \mathbb R^n_+$, $s = z_1+\cdots+z_n$, $\alpha \in (0,1]$. Using the coarea formula \eqref{aux.coarea}, we obtain the following chain of equalities:
\begin{equation*}
 \begin{gathered}
    (Me^{-q_\alpha})(z) = \int_{\mathbb R^n_+} x^{z-I} \exp\bigl( -C ( a_1 x_1^\alpha + \cdots + a_n x_n^\alpha)^\frac{1}{\alpha} \bigr) \, dx \\
    = \tfrac{a_1^{-\frac{z_1}{\alpha}} \cdots a_n^{-\frac{z_n}{\alpha}}}{\alpha^n C^s} \int_{\mathbb R^n_+} y^{\frac z \alpha - I} \exp\bigl(- (y_1+\cdots+y_n)^\frac{1}{\alpha} \bigr) \, dy \\
   = \tfrac{a_1^{-\frac{z_1}{\alpha}} \cdots a_n^{-\frac{z_n}{\alpha}}}{\alpha^{n-1} C^s} \int_0^\infty t^{s - 1}\exp(-t) \, dt  \, (R_l y^{\frac z \alpha - I})(I).
 \end{gathered}
\end{equation*}
Using this formula and formula \eqref{aux.MqtoRq} with $q(x)=l(x)$, $h(t) = e^{-t}$, and taking into account that $(Me^{-l})(z)=\Gamma(z_1)\cdots\Gamma(z_n)$, we obtain that
\begin{equation}\label{tpr.MqCES}
    (Me^{-q_\alpha})(z) = \frac{\Gamma(s)}{\alpha^{n-1}C^s\Gamma(\tfrac{s}{\alpha})}\prod\limits_{j=1}^n a_j^{-\frac{z_j}{\alpha}} \Gamma(\tfrac{z_j}{\alpha}).
\end{equation}
Corollary \ref{cor.nCES} follows directly from Theorem \ref{thm.injPiq}, Proposition \ref{prop.nest} and formula \eqref{tpr.MqCES}, if we take into account that $(Me^{-q_\alpha})(z) \neq 0$ for $\Re z \in \mathbb R^n_+$.
\end{proof}

\section{Aknowledgements}
The author would like to thank Prof. A.\,A. Shananin and Prof. R.\,G. Novikov for their helpful comments and suggestions.

The present work is supported by RFBR grant №14-07-00075 А.

\bibliographystyle{IEEEtranS}
\bibliography{biblio_utf}

\end{document}